\documentclass{amsart}
\usepackage{amsmath,amsthm,latexsym,amssymb,hyperref, amsfonts}
\usepackage[T1]{fontenc}
\usepackage{stmaryrd}
\usepackage{eucal}
\usepackage{mathrsfs}
\usepackage[all]{xy}
\usepackage{color}
\usepackage{tikz}
\usepackage{url}
\usepackage{comment}
\usepackage{tikz-cd}
\usepackage{enumerate}
\usepackage{enumitem}
\usepackage{dutchcal}
\usepackage[normalem]{ulem}
\usetikzlibrary{matrix,arrows,decorations.pathmorphing}
\usepackage{quiver}




\numberwithin{equation}{section}

\theoremstyle{definition}
\newtheorem{defi}{Definition}[section]

\newtheorem{ex}[defi]{Example}
\newtheorem{rem}[defi]{Remark}
\newtheorem{notation}[defi]{Notation}
\newtheorem*{ack}{Acknowledgements}

\theoremstyle{plain}
\newtheorem{thm}[defi]{Theorem}
\newtheorem{prop}[defi]{Proposition}
\newtheorem{lem}[defi]{Lemma}
\newtheorem{cor}[defi]{Corollary}

\newcommand{\OO}{\mathcal{O}}

\newcommand{\colim}{colim}
\renewcommand{\lim}{lim}

\newcommand{\Ek}{\mathbb{E}_k}

\newcommand{\C}{\mathsf{C}}
\newcommand{\B}{\mathsf{B}}

\newcommand{\op}{\mathsf{op}}


\newcommand{\X}{\mathscr{X}} 
\newcommand{\Xh}{\mathscr{Y}} 
\newcommand{\XX}{\mathscr{X}} 
\newcommand{\YY}{\mathscr{Y}}
\newcommand{\Tinf}{\mathcal{S}}
\newcommand{\Cinf}{\mathcal{C}}
\newcommand{\Dinf}{\mathcal{D}}

\newcommand{\Catinf}{\mathcal{Cat}_\infty}
\newcommand{\Alginf}{\mathcal{Alg}}
\newcommand{\Grp}{\mathcal{Grp}}
\newcommand{\coalginf}{\mathcal{CoAlg}}

\newcommand{\comodinf}{\mathcal{CoMod}}
\newcommand{\lmodinf}{\mathcal{LMod}}
\newcommand{\rmodinf}{\mathcal{RMod}}

\newcommand{\Fun}{\mathcal{Fun}}


\long\def\emptytext#1{}

\title[Koszul duality in higher topoi]{Koszul duality in higher topoi}

\author[Jonathan Beardsley]{Jonathan Beardsley}
\address{Department of Mathematics and Statistics,
	University of Nevada, Reno,
	1664 N. Virginia Street, Reno, NV 89557, USA}
\email{jbeardsley@unr.edu}

 \author{Maximilien P\'eroux}
\address{Department of Mathematics,
University of Pennsylvania,
David Rittenhouse Lab.,
209 South 33rd Street, Philadelphia, PA 19104, USA}
\email{mperoux@sas.upenn.edu}

\subjclass [2010] {55U30, 18D35, 16T15}

\begin{document}

\begin{abstract}
	We show that there is an equivalence in any $n$-topos $\XX$ between the pointed and $k$-connective objects of $\XX$ and the $\Ek$-group objects of the $(n-k-1)$-truncation of $\XX$. This recovers, up to equivalence of $\infty$-categories, some classical results regarding algebraic models for $k$-connective, $(n-1)$-coconnective homotopy types. Further, it extends those results to the case of sheaves of such homotopy types.  We also show that for any pointed and $k$-connective object $X$ of $\X$ there is an equivalence between the $\infty$-category of modules in $\X$ over the associative algebra $\Omega^k X$, and the $\infty$-category of comodules in $\X$ for the cocommutative coalgebra $\Omega^{k-1}X$. All of these equivalences are given by truncations of Lurie's $\infty$-categorical bar and cobar constructions, hence the terminology ``Koszul duality.''
%
\end{abstract}

\maketitle

\section{Introduction}
Classical Koszul duality, sometimes called bar-cobar duality, gives an equivalence between suitably ``derived'' versions of algebras and coalgebras in a symmetric monoidal category equipped with some notion of ``homotopy theory'' (e.g.~a Quillen model category or an $\infty$-category). This general idea has many manifestations in algebra, topology and category theory. In the topological case, the duality relationship between algebras and coalgebras manifests as a relationship between pointed connected spaces $X$, which are cocommutative coalgebras via the diagonal map, and their associated loop spaces $\Omega X$, which are $\mathbb{A}_\infty$-algebras in the category of spaces. More specifically, it was shown in \cite[Section 13]{maygeom} that every grouplike $\mathbb{A}_\infty$-algebra in spaces is equivalent to the space of loops on a pointed and connected space. 

By working with simplicial sets, this is extended to a Quillen equivalence of model categories between simplicial groups and \textit{reduced} simplicial sets (which model pointed, connected spaces) in \cite[Theorem 6.3, Corollary 6.4]{goerssjardine}. In this work, we generalize this type of result in two different directions: first we show that such a result holds in an arbitrary $n$-topos (extending a result of Lurie's in $\infty$-topoi) as defined in Sections 6.4 and 6.1, respectively, of \cite{htt}; then we show that this ``categorifies'' to an equivalence of $\infty$-categories between modules over an algebra and comodules over its Koszul dual coalgebra.

This relationship between modules and comodules is already established in the literature for the category of spaces. Recall that there is an equivalence between spaces \textit{over} a space $X$, and comodules for the diagonal coalgebra structure of $X$ (this fact is relatively well known but also follows from our Corollary \ref{corollary: slicecomodules}). Then for an arbitrary space $X$, the duality between left $\Omega X$-modules and $X$-comodules in spaces is given in \cite[Theorem 2.1]{drordwyerkan} and \cite[Theorem 8.5]{shulmanparametrized} (for simplicial sets and topological spaces, respectively). There is also a pointed version of this equivalence given (after localizing) in \cite[Theorem 4.14]{hscomod}. 

In this paper, following \cite{htt,ha}, we will use the $\infty$-category of $\infty$-groupo\-ids, denoted $\Tinf$, to model topological spaces or simplicial sets. This $\infty$-category has the added benefit of being an $\infty$-topos. In fact, it is the canonical example of an $\infty$-topos and all other $\infty$-topoi behave similarly to $\Tinf$ in many important ways. The most common examples of $\infty$-topoi are $\infty$-categories of sheaves of spaces and certain localizations thereof. More generally, one can work with $n$-topoi for $0\leq n \leq\infty$ (cf.~\cite[\S 6.4]{htt}). The canonical $n$-topos is the $\infty$-category of $\infty$-groupoids whose homotopy groups are concentrated in degree $n-1$ and below, which we will denote by $\tau_{\leq n-1}\Tinf$. It is shown in \cite{htt} that every $n$-topos, for $n<\infty$, is in fact a category of sheaves of homotopy $n$-types on a site, which is not the case for $\infty$-topoi. Throughout this paper we will typically allow $n=\infty$, as many of our results hold for $n$-topoi with $n<\infty$ and $\infty$-topoi. 

An especially nice property of $n$-topoi for is that they admit all (small) limits and colimits, so they always have a symmetric monoidal structure via the categorical product. As a result, every object of an $n$-topos is a cocommutative coalgebra (cf. Corollary \ref{corollary: slicecomodules}). In Theorem \ref{thm: Koszul of gp like alg in Lurie}, which is a restatement of a result of Lurie, we recall the equivalence between pointed $k$-connective objects of an $\infty$-topos and $\Ek$-group objects of the same $\infty$-topos. To situate this as a Koszul duality result, we rephrase it as a relationship between algebras and coalgebras. This is Theorem \ref{cor: cobar in n-topoi} from which we obtain as examples some classical algebraic descriptions of connected and coconnected homotopy types \cite{brownGilbertAlgebraic, bullejosSymmetricCatGroups, garzonMirandaCatGroups} (though our results, being $\infty$-categorical, are manifestly less strict than the cited ones). See Examples \ref{ex: browngilbert} and \ref{ex: bullejos} in particular.

In our main result, Theorem \ref{theorem: mainthminfty}, we extend the equivalence between algebras and coalgebras to one between categories of modules and comodules \textit{over} those algebras and coalgebras in any $n$-topos. In particular since any $n$-topos $\X$ admits a finite limit preserving functor from the $n$-topos of $(n-1)$-groupoids $\pi^\ast\colon\tau_{\leq n-1}\Tinf\to \X$, every looping of an $(n-1)$-groupoid $\Omega X$ defines a group object  $\pi^\ast(\Omega X)\in\X$ and every $(n-1)$-groupoid $X$ defines a cocommutative coalgebra $\pi^\ast X\in\X$. So as a special case, our result gives an alternative ``coalgebraic'' description of the objects in any $n$-topos equipped with an action of a loop space. This special case can also be thought of as an $n$-toposic version of the the Grothendieck construction and its inverse (cf. Lemmas \ref{lemma: gengrothconstruction} and \ref{lemma: ngengrothconstruction}). This generalizes a theorem of Schreiber \cite[Theorem 3.4.20]{schreiberdifferentialcoh}.

\begin{ack} In many ways this paper represents an effort by the authors to become better acquainted with the ideas and methods of the theory of $\infty$-topoi.  We are very grateful to our colleagues and friends, who are more familiar with this material, for many interesting and helpful discussions. In particular, we would like to thank Ben Antieau, Omar Antol\'in-Camarena,  Alexander Campbell, Peter Haine, Rune Haugseng, Piotr Pstr\k{a}gowski, and Charles Rezk.
The first author was partially supported by NSF grant DMS-1745583 and a Simons Foundation Collaboration Grant, Award \#853272, while working on this paper.
\end{ack}

\begin{notation} We begin by setting some notation that we will use throughout and recalling some elementary notions of the theory of $\infty$-categories and $\infty$-topoi.
	\begin{enumerate} 
	\item We use the much of the notation and terminology of \cite{htt} and \cite{ha}. In particular, we write $\Catinf$ and $\Tinf$ to refer to the $\infty$-categories of small $\infty$-categories and $\infty$-groupoids, respectively. 
	
	\item If $\Cinf$ is an $\mathcal{O}$-monoidal $\infty$-category then we will write $\mathcal{Alg}_{\mathcal{O}}(\Cinf)$ for the $\infty$-category of $\mathcal{O}$-algebras in $\Cinf$. In the case that $\mathcal{O}=\mathbb{E}_1\simeq \mathbb{A}_\infty$, we will just write $\mathcal{Alg}(\Cinf)$. 
	
	\item If $\Cinf$ is a presentable $\infty$-category then it admits all (small) colimits, and so is tensored over $\infty$-groupoids, as described in \cite[Remark 5.5.1.7]{htt}. As such, given any $\mathbb{A}_\infty$-algebra $A$ in $\Tinf$, we can consider left modules over $A$ in $\Cinf$ as in \cite[Definition 4.2.1.13]{ha}. We will denote this $\infty$-category by $\lmodinf_{A}(\Cinf)$. 
	
	\item Let $\Cinf$ be a $\mathcal{O}$-monoidal $\infty$-category, as in \cite[2.1.2.15]{ha}, where $\mathcal{O}^\otimes$ is an $\infty$-operad. Recall that there is an induced $\mathcal{O}$-monoidal structure on the opposite $\infty$-category $\Cinf^\op$, see \cite[2.4.2.7]{ha}. An $\mathcal{O}$-coalgebra in $\Cinf$ is an $\mathcal{O}$-algebra in $\Cinf^\op$. We denote the $\infty$-category of $\mathcal{O}$-coalgebras by $\coalginf_\mathcal{O}(\Cinf):=\mathcal{Alg}_\mathcal{O}(\Cinf^\op)^\op$. 
	Given an $\mathcal{O}$-coalgebra $C$ in $\Cinf$, we denote the $\infty$-category of left comodules over $C$ in $\Cinf$ by:
	\[
	\mathcal{LCoMod}_C(\Cinf):=\mathcal{LMod}_C(\Cinf^\op)^\op,
	\]
	and similarly, we use $\mathcal{RCoMod}_C(\Cinf)$ to denote right comodules.
	
	\item We will be especially interested in the little $k$-cube $\infty$-operads $\Ek$. We will always assume that $k>0$ when considering these $\infty$-operads. 
	
	\item When we say (co)associative (co)algebras and (co)commutative (co)al\-gebras (or monoids) we will mean $\mathbb{A}_\infty$-(co)algebras and $\mathbb{E}_\infty$-(co)algebras respectively.
	
	\item We will make use of colimits and limits over constant diagrams, so to simplify notation, whenever we wish to denote the colimit or limit of a constant diagram at $U$ over an indexing $\infty$-category $\Cinf$, we will write $\colim_{\Cinf}U$ or $\lim_{\Cinf} U$, respectively. The $\infty$-category within which this colimit or limit is being taken will always be clear from context.

	\item If an $\infty$-category $\Cinf$ has a terminal object, we will denote this terminal object by $1_\Cinf$. In particular, we will write $1_{\Tinf}$ for a contractible $\infty$-groupoid.
	
	\item We use the terms $n$-topos and $\infty$-topos in the sense of \cite[\S 6]{htt}. These are $\infty$-categories that behave like the $\infty$-category of $(n-1)$-groupoids and $\infty$-groupoids respectively. These $\infty$-categories are a generalization of what are typically called Grothendieck topoi in 1-category theory (as opposed to elementary topoi). It would be more precise to use the terminology $(\infty,1)$-topoi and $(n,1)$-topoi, but we follow \cite{htt} in omitting the second index, as every category in this paper will only have invertible $k$-morphisms for $k>1$.

	\item For a pointed object $X$ in an $\infty$-topos $\X$, we will write $\Omega X$ to denote the ``loop space'' object of $X$, i.e.~the pullback of the cospan $1_{\X}\to X\leftarrow 1_{\X}$ determined by the pointing.

	\item Any $\infty$-topos $\X$ admits a unique \textit{geometric morphism} $\pi\colon \X\to \Tinf$ (cf. \cite[\S 6.3.1]{htt} and \cite[Proposition 6.3.4.1]{htt}). This implies that there is a functor $\pi^\ast\colon \Tinf\to \X$ which preserves colimits and finite limits. Therefore, for any Kan complex $X$ there is an object $\pi^\ast(X)\in\X$ which we think of as $X$ pulled back along $\pi$. Because the functor $\pi^\ast$ preserves colimits and terminal objects and any $\infty$-groupoid may be written as $X\simeq \colim_X1_{\Tinf}$, we have that $\pi^\ast(X)\simeq colim_X1_{\X}$.
	
	\item If $\XX$ is an $n$-topos then it can be written as a truncation of an $\infty$-topos (cf.~\cite[Theorem 6.4.1.5 (2)]{htt}). Thus by the number (9) above and \cite[Lemma 6.4.5.5]{htt} (taking $\mathcal{C}=\Tinf$) there is a unique geometric morphism of $n$-topoi $\pi\colon \XX\to \tau_{\leq n-1}\Tinf$, where $\tau_{\leq n-1}\Tinf$ is the $\infty$-category of $(n-1)$-groupoids (i.e.~$\infty$-groupoids with trivial homotopy groups above degree $n-1$). Thus every $(n-1)$-groupoid $X$ can be pulled back to an object $\pi^\ast(X)\simeq colim_{X}1_{\XX}\in\XX$.
		\end{enumerate}
\end{notation}

\section{Koszul Duality in Higher Topoi for Coalgebras and Algebras}

Let $\Cinf$ be a monoidal $\infty$-category, which admits both totalizations of cosimplicial objects and geometric realizations of simplicial objects.   Then, recall from  \cite[5.2.3.6, 5.2.3.9]{ha}, there is a \emph{bar-cobar adjunction} between augmented $\Ek$-algebras and coaugmented $\Ek$-coalgebras in $\Cinf$ for $k\geq 1$:
\[
\begin{tikzcd}
\mathcal{Alg}_{\Ek}^{aug}(\Cinf) \ar[shift left=2]{r}{\B^k}[swap]{\perp} & \coalginf_{\Ek}^{coaug}(\Cinf)\ar[shift left=2]{l}{\C^k},
\end{tikzcd}
\]
induced by the iterated bar and cobar constructions. This adjunction is in general not an equivalence of $\infty$-categories.

We shall be interested in the cases where $\Cinf$ is endowed with its Cartesian symmetric monoidal structure. In that situation, \cite[Proposition 2.4.2.5]{ha} gives an equivalence $\mathcal{Alg}_{\Ek}^{aug}(\Cinf)\simeq \mathcal{Mon}_{\Ek}(\Cinf)$ where the right hand side refers to the $\infty$-category of $\Ek$-monoid objects described in \cite[2.4.2]{ha} for the $\infty$-operad $\Ek^\otimes$. Note that it is superfluous to say ``augmented monoid'' since all algebra objects, equivalently all monoid objects, are augmented in a Cartesian monoidal $\infty$-category.

On the other hand, in any $\infty$-category equipped with the Cartesian monoidal structure, every object admits a unique cocommutative coalgebra structure. This is described explicitly, along with its implications for $\infty$-categories of modules and comodules, by the following lemma and its corollary.

\begin{lem}\label{lemma: cocartbimodules}
	If $\Cinf$ is a coCartesian symmetric monoidal $\infty$-category, then every object of $\Cinf$ is an $\Ek$-algebra in a homotopically unique way for every $0\leq k\leq\infty$. Additionally, for any $X\in\Cinf$ there is an equivalence $\lmodinf_{X}(\Cinf)\simeq\rmodinf_{X}(\Cinf)\simeq \Cinf^{\backslash X}$, where $\Cinf^{\backslash X}$ denotes the $\infty$-category of objects under $X$ in $\Cinf$. 
\end{lem}

\begin{proof}
	It is shown in \cite[Corollary 2.4.3.9]{ha} that for a unital $\infty$-operad $\OO^\otimes$ there is an equivalence $\Alginf_{\OO}(\Cinf)\simeq Fun(\OO,\Cinf)$ where $\OO$, as an $\infty$-category, is the fiber over the set $\{\ast, 1\}$ of the fibration $\OO^{\otimes}\to N(\mathcal{Fin}_\ast)$ defining the $\infty$-operad structure on $\OO^\otimes$. For $\Ek^\otimes$ the underlying $\infty$-category is precisely $\Delta^0$, the terminal $\infty$-category. Therefore $\Alginf_{\Ek}(\Cinf)\simeq \Cinf$, i.e.~ every object of $\Cinf$ is a uniquely an $\Ek$-algebra. The equivalence $\lmodinf_{X}(\Cinf)\simeq\rmodinf_{X}(\Cinf)$ follows from \cite[Corollaries 4.5.1.6,  5.1.4.11]{ha} (where we are using the $\mathbb{E}_1$-structure on $X$ to define the $\infty$-category of left and right modules).
	
From \cite[Corollary 2.6.6]{haugsengshifted}, we have that for any cocartesian symmetric monoidal $\infty$-category $\Cinf$, there is an equivalence between algebras in $\Cinf$ for the bimodule $\infty$-operad $\mathcal{BM}^\otimes$ (whose algebras are triples $(A,M,B)$ where $A$ and $B$ are $\mathbb{A}_\infty$-algebras and $M$ is an $(A,B)$-bimodule, as described in \cite[Section 4.3.2]{ha}) and functors $\Fun(\Sigma^{1,\op},\Cinf)$ where $\Sigma^{1}$ is the (nerve of the) span category $\bullet\leftarrow\bullet\rightarrow\bullet$. 
The algebras of $\mathcal{BM}^\otimes$ for fixed algebras $A$ and $B$ are denoted ${}_A\mathcal{BMod}_B(\Cinf)$. 
Thus we have an equivalence between ${}_A\mathcal{BMod}_B(\Cinf)$ and diagrams in $\Cinf$ of the form $A\to M\leftarrow B$ (note that this does not follow from \cite[Corollary 2.4.3.9]{ha} as above because it is ultimately a statement about non-symmetric $\infty$-operads).  
Using \cite[Corollary 2.4.3.10]{ha} and recalling that the tensor unit of $\Cinf$ is the initial object, we obtain for any object $X\in \Cinf$ an equivalence: \[\lmodinf_X(\Cinf)\simeq{}_X\mathcal{BMod}_{1_\Cinf}(\Cinf)\simeq  \Cinf^{\backslash X}.\qedhere\]
\end{proof}

\begin{cor}\label{corollary: slicecomodules}
	Let $\Cinf$ be a Cartesian symmetric monoidal $\infty$-category. Then every object of $\Cinf$ is an $\Ek$-coalgebra in a unique way for all $0\leq k\leq\infty$ and for any $X\in\Cinf$ there are equivalences $\mathcal{LCoMod}_{X}(\Cinf)\simeq \mathcal{RCoMod}_{X}(\Cinf)\simeq \Cinf_{/X}$ where $\Cinf_{/X}$ denotes the $\infty$-category of objects over $X$ in $\Cinf$. 
\end{cor}

\begin{proof}
Apply Lemma \ref{lemma: cocartbimodules} to $\Cinf^{\op}$. 
\end{proof}

\begin{rem}\label{rem: jequalsk}
Notice that the proof of Lemma \ref{lemma: cocartbimodules} implies that for any $0\leq j\leq k\leq\infty$ there is a sequence of equivalences $\Cinf\simeq\Alginf_{\Ek}(\Cinf)\simeq\Alginf_{\mathbb{E}_j}(\Cinf)$ and the same holds for coalgebras. Therefore we have an identification $\coalginf_{\mathbb{E}_\infty}(\Cinf)\simeq\coalginf_{\Ek}(\Cinf)\simeq \Cinf$ when $\Cinf$ is Cartesian monoidal.  Moreover, by using these identifications, its easy to see that \textit{coaugmented} $\Ek$-coalgebras, for any $0\leq k\leq\infty$, are precisely \textit{pointed} objects of $\Cinf$. Going forward, whenever $\Cinf$ is Cartesian monoidal, we will identify $\Cinf_\ast$ and $\coalginf_{\mathbb{E}_\infty}^{coaug}(\Cinf)$ without comment.
\end{rem}

\noindent In light of the above, we can rewrite the adjunction from the beginning of this section as follows:

\[
\begin{tikzcd}
\mathcal{Mon}_{\Ek}(\Cinf) \ar[shift left=2]{r}{\B^k}[swap]{\perp} & \Cinf_\ast\ar[shift left=2]{l}{\C^k}.
\end{tikzcd}
\]

\begin{rem}
	As noted in \cite[5.2.6.12]{ha}, if we compose the iterated cobar construction $\C^k$ on a Cartesian monoidal $\infty$-category with the forgetful functor $\mathcal{Mon}_{\Ek}(\Cinf)\to \Cinf$, we obtain the ``iterated loop space'' construction $X \mapsto \Omega^k X$. Note that, in the case that $\Cinf\simeq \Tinf$, the object $\Omega^k X$ is equivalent to the usual iterated loop space of $X$ \cite[Remark 5.2.6.12]{ha}. Moreover, if $\Cinf\simeq \X$ for an $\infty$-topos $\X$ then the pullback functor $\pi^\ast\colon \Tinf\to \X$ preserves finite limits, so we have that $\Omega^k \pi^\ast(Y)\simeq \pi^\ast(\Omega^k Y)$ for any $\infty$-groupoid $Y$. 
\end{rem}

In \cite[5.2.6]{ha}, Lurie demonstrates that the bar-cobar adjunction fails to be an equivalence, even in the $\infty$-topos of spaces, for two reasons:
\begin{enumerate}
	\item A map $f:X\rightarrow Y$ of pointed spaces induces a weak homotopy equivalence $\Omega^k X \rightarrow \Omega^k Y$ as long as $f$ induces an isomorphism on homotopy groups in degree $k$ and higher. We thus need to consider \emph{$k$-connective} objects on the side of coalgebras if we want the bar-cobar adjunction to be an equivalence. The notion of being $k$-connective extends naturally to any $\infty$-topos (cf. \cite[Definition 6.5.1.11]{htt}) and we give the generalization of this to $n$-topoi in Definition \ref{def: connective for n-topoi}.
	
	\item If $Y$ is an $\Ek$-monoid in spaces, then its multiplication induces a monoid structure on its path-components $\pi_0(Y)$. An equivalence $Y\simeq \Omega X$ would imply that $\pi_0(Y)\cong \pi_1(X)$, making $\pi_0(Y)$ into a group. We therefore need to consider \emph{grouplike $\Ek$-monoids} in the sense of \cite[5.2.6.6]{ha} on the side of algebras in the bar-cobar adjunction. These can be thought of as objects whose ``space of connected components'' forms a group with respect to the monoid structure.
\end{enumerate}

By accounting for the above issues, Lurie obtains an equivalence in $\infty$-topoi equipped with the Cartesian symmetric monoidal structure. Going forward we will use $\mathcal{Grp}_{\Ek}(\Cinf)$ to denote the $\infty$-category of grouplike $\Ek$-monoids in an $\infty$-category $\Cinf$ with finite products and refer to the objects of $\mathcal{Grp}_{\Ek}(\Cinf)$ as ``$\Ek$-group objects.'' In this language, Lurie's theorem can be restated as follows.

\begin{thm}[{\cite[5.2.6.15]{ha}}]\label{thm: Koszul of gp like alg in Lurie}
	Let $\X$ be an $\infty$-topos, endowed with the Cartesian symmetric monoidal structure. Then the cobar construction induces an equivalence between pointed $k$-connective objects in $\X$ and $\Ek$-group objects in $\X$: 
	\[
	\mathsf{C}^k\colon \X_*^{\geq k}\stackrel{\simeq}\longrightarrow \mathcal{Grp}_{\Ek}(\X).
	\]
	Moreover, the homotopy inverse, of $\C^k$ is the iterated bar construction $\B^k$. 
\end{thm}


Our goal is to prove an analogue of this theorem for $n$-topoi for $0\leq n<\infty$. Before stating and proving our result as Theorem \ref{cor: cobar in n-topoi}, it will be necessary to extend certain definitions and lemmas from \cite{htt} to the setting of $n$-topoi. Note that many of our results hold for $(-1)$-topoi as well, but up to equivalence there is only one $(-1)$-topos, the terminal $\infty$-category, and so we ignore this case.

\begin{defi}\label{definition: truncated}
	Let $\Cinf$ be an $\infty$-category and $-2\leq n\leq\infty$. Following \cite[Definition 5.5.6.1]{htt} we say that an object $X$ of $\Cinf$ is \emph{$m$-truncated} if for every object $Y\in \Cinf$ the mapping space $\Cinf(Y,X)$ is an $m$-truncated space. We denote the full subcategory of $m$-truncated objects of $\Cinf$ by $\tau_{\leq m}\Cinf$. 
\end{defi}

\begin{rem}
	Recall that if $m\geq 0$, then an $m$-truncated space is precisely a space which has trivial homotopy groups in degrees greater than $m$. Otherwise we say a space is $(-2)$-truncated if it is contractible and non-empty and $(-1)$-truncated if it is either contractible or empty. Notice that this implies that a $(-2)$-truncated object of an $\infty$-category must always be terminal. Some of our results will concern $m$-truncated objects where it is possible that $m<-2$. In that situation we will make the convention to take $\tau_{\leq m}\Cinf$ to be the full subcategory of $(-2)$-truncated objects in $\Cinf$.
\end{rem}

\begin{prop}
	Let $\X$ be an $n$-topos for $0\leq n< \infty$. Then every object of $\X$ is $(n-1)$-truncated.
\end{prop}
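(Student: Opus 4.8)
The case $n=\infty$ is immediate, since every object of every $\infty$-category is $\infty$-truncated; so assume $0\le n<\infty$. The plan is to reduce to the defining presheaf model of an $n$-topos, where the required truncation becomes visible ``pointwise''.

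By definition (\cite[Definition 6.4.1.1]{htt}) there is a small $\infty$-category $\mathcal{C}$ together with an accessible left exact localization $L\colon\mathcal{P}_{\le n-1}(\mathcal{C})\to\X$, where $\mathcal{P}_{\le n-1}(\mathcal{C}):=\Fun(\mathcal{C}^\op,\tau_{\le n-1}\Tinf)$ is the $\infty$-category of presheaves of $(n-1)$-truncated spaces on $\mathcal{C}$. All I would use is the purely formal consequence that $\X$ is (equivalent to) a reflective, hence full, subcategory of $\mathcal{P}_{\le n-1}(\mathcal{C})$, which is in turn a full subcategory of $\mathcal{P}(\mathcal{C})=\Fun(\mathcal{C}^\op,\Tinf)$. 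Since these inclusions are fully faithful, the mapping space between two objects of $\X$ agrees with the mapping space computed in $\mathcal{P}(\mathcal{C})$; so it suffices to prove that every object $G$ of $\mathcal{P}(\mathcal{C})$ whose values $G(c)$ are all $(n-1)$-truncated spaces is an $(n-1)$-truncated object of $\mathcal{P}(\mathcal{C})$.

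For this I would use that $\mathrm{Map}_{\mathcal{P}(\mathcal{C})}(-,G)\colon\mathcal{P}(\mathcal{C})^\op\to\Tinf$ sends colimits to limits, together with: (i) every presheaf $F$ on $\mathcal{C}$ is a colimit of a small diagram of representables, $F\simeq\colim_{i\in I}\mathbf{y}(c_i)$ (\cite[Corollary 5.1.5.8]{htt}); and (ii) the Yoneda lemma, which gives $\mathrm{Map}_{\mathcal{P}(\mathcal{C})}(\mathbf{y}(c),G)\simeq G(c)$, an $(n-1)$-truncated space by hypothesis. Because $\tau_{\le n-1}\Tinf$ is closed under small limits in $\Tinf$ (its inclusion is right adjoint to the truncation functor $\tau_{\le n-1}$, hence preserves limits; see also \cite[Proposition 5.5.6.5]{htt}), we obtain
\[
\mathrm{Map}_{\mathcal{P}(\mathcal{C})}(F,G)\simeq\lim_{i\in I^\op}\mathrm{Map}_{\mathcal{P}(\mathcal{C})}(\mathbf{y}(c_i),G)\simeq\lim_{i\in I^\op}G(c_i),
\]
a limit of $(n-1)$-truncated spaces, hence $(n-1)$-truncated. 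Thus $G$ is an $(n-1)$-truncated object of $\mathcal{P}(\mathcal{C})$. Applying this to an arbitrary object $G$ of $\X\subseteq\mathcal{P}_{\le n-1}(\mathcal{C})\subseteq\mathcal{P}(\mathcal{C})$ and restricting attention to test objects lying in $\X$ shows that $G$ is an $(n-1)$-truncated object of $\X$.

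I do not anticipate a genuine obstacle: the argument is essentially an unwinding of the presheaf presentation of $n$-topoi together with the stability of truncatedness under limits. The one point worth stating explicitly is that ``$X$ is an $(n-1)$-truncated object of $\X$'' quantifies only over test objects $Y\in\X$, so it is \emph{implied} by --- rather than equivalent to --- the stronger statement proved inside $\mathcal{P}(\mathcal{C})$; this implication is immediate from full faithfulness of $\X\hookrightarrow\mathcal{P}(\mathcal{C})$. An essentially equivalent alternative route is to invoke the characterization of $n$-topoi as the $\infty$-categories of the form $\tau_{\le n-1}\mathcal{Y}$ for $\mathcal{Y}$ an $\infty$-topos, and then observe that an $(n-1)$-truncated object of $\mathcal{Y}$ remains $(n-1)$-truncated in the full subcategory $\tau_{\le n-1}\mathcal{Y}$.
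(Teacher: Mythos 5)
Your proof is correct, but it takes a different route from the paper's. The paper disposes of the finite case in one line by citing \cite[Proposition 6.4.5.7]{htt}: every $n$-topos is equivalent to the full subcategory of $(n-1)$-truncated objects of an ($n$-localic) $\infty$-topos, and truncatedness of an object is detected by mapping spaces, which are unchanged under passage to a full subcategory. You instead unwind \cite[Definition 6.4.1.1]{htt} directly: realize $\X$ as a reflective (hence full) subcategory of $\Fun(\mathcal{C}^{\op},\tau_{\leq n-1}\Tinf)\subseteq\Fun(\mathcal{C}^{\op},\Tinf)$, write an arbitrary presheaf as a colimit of representables, and use Yoneda together with the closure of $(n-1)$-truncated spaces under small limits to see that pointwise truncatedness forces all mapping spaces to be $(n-1)$-truncated. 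Both arguments are valid; yours is more self-contained and more elementary, since it uses only the reflectivity of the localization (not left exactness or accessibility) plus standard presheaf facts, and it makes visible \emph{why} the statement holds, whereas the paper's leans on a substantially deeper structural theorem. You also correctly flag, and the paper's proof elides, the small point that truncatedness of an object of $\X$ quantifies only over test objects in $\X$ and so follows from (rather than being identical to) truncatedness in the ambient presheaf category; and your closing alternative is precisely the paper's argument.
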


\begin{proof}
	The claim follows from \cite[Proposition 6.4.5.7]{htt} which states that any $n$-topos is equivalent to the full subcategory of $(n-1)$-truncated objects of an $\infty$-topos (and being $(n-1)$-truncated is stable under equivalence of $\infty$-categories). 
\end{proof}

\begin{rem}
In what follows we will very often use the fact that every $n$-topos $\XX$ is equivalent to $\tau_{\leq n-1}\YY$ for an $\infty$-topos $\YY$ (cf.~\cite[Theorem 6.4.1.5 (2)]{htt}). This allows us to reduce the proofs of our results to understanding the way in which truncation interacts with certain algebraic structures on $\infty$-topoi. 
\end{rem}

\begin{prop}\label{prop: truncfunc}
Let $\X$ be an $n$-topos for $0\leq n\leq\infty$ and let $\tau_{\leq m}\X$ be the full subcategory of $m$-truncated objects of $\X$ for $m\geq -2$. Then there is a finite product preserving left adjoint to the inclusion $\tau_{\leq m}\X\subseteq\X$. 
\end{prop}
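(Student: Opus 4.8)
The plan is to obtain the left adjoint formally from presentability, and then to deduce preservation of finite products by recognizing $\tau_{\leq k}\X$ as a reflective exponential ideal of the Cartesian closed $\infty$-category $\X$. The case $k=\infty$ is vacuous, since then $\tau_{\leq k}\X=\X$ and the identity functor works; so assume $k$ is finite. Note also that no special treatment of ``large $k$'' is needed: when $k$ is larger than the truncation level of $\X$ one simply has $\tau_{\leq k}\X=\X$, and the argument below degenerates harmlessly.

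First I would record that any $n$-topos $\X$ is presentable: for $n=\infty$ this is part of the definition, and for $n<\infty$ it follows from \cite[Proposition 6.4.5.7]{htt}, which realizes $\X$ as the full subcategory of $(n-1)$-truncated objects of an $\infty$-topos. Consequently the inclusion $\tau_{\leq k}\X\subseteq\X$ admits a left adjoint $\tau_{\leq k}$ by \cite[Proposition 5.5.6.18]{htt}; this is the truncation functor, and it only remains to check that it preserves finite products.

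For this, note first that $\X$ is Cartesian closed: for an $\infty$-topos because $C\times(-)$ preserves colimits and hence admits a right adjoint $\underline{\mathrm{Hom}}(C,-)$ by the adjoint functor theorem, and for an $n$-topos because, with $\X$ realized as above, the internal hom of two $(n-1)$-truncated objects of the ambient $\infty$-topos is again $(n-1)$-truncated. Next, $\tau_{\leq k}\X$ is an exponential ideal: if $D$ is $k$-truncated, then so is $\underline{\mathrm{Hom}}(C,D)$ for every $C\in\X$, since $\mathrm{Map}_\X(E,\underline{\mathrm{Hom}}(C,D))\simeq\mathrm{Map}_\X(E\times C,D)$ for all $E$. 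As $\tau_{\leq k}\X$ is also reflective and closed under finite products (including the empty one, the terminal object being $(-2)$-truncated), the standard argument applies: writing $L=\tau_{\leq k}$, one shows that $L(X\times Y)$ and $LX\times LY$ corepresent the same functor on $k$-truncated objects, by repeatedly rewriting $\mathrm{Map}_\X(A\times B,D)\simeq\mathrm{Map}_\X(A,\underline{\mathrm{Hom}}(B,D))$ and replacing $A$ by $LA$ (legitimate because $\underline{\mathrm{Hom}}(B,D)$ is $k$-truncated), and then invokes the Yoneda lemma; together with $L(1_\X)\simeq 1_\X$ this gives preservation of all finite products.

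The only real subtlety is this last step. One should \emph{not} try to cite left-exactness of truncation: in an $\infty$-topos $\tau_{\leq k}$ does not preserve all finite limits --- already $\tau_{\leq 0}=\pi_0$ fails to preserve pullbacks in $\Tinf$ --- so the argument genuinely needs the product-specific input coming from the exponential-ideal property, which is the $\infty$-categorical form of the classical fact that a reflective exponential ideal of a Cartesian closed category has a product-preserving reflector. The remaining ingredients --- presentability, existence of internal homs, and stability of $k$-truncatedness under finite products and under internal homs --- are all routine.
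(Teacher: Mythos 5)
Your proof is correct, but it takes a genuinely different route from the paper's. The paper simply defers to Lurie: for $n=\infty$ it cites \cite[Proposition 5.5.6.18]{htt} for the existence of the adjoint and \cite[Lemma 6.5.1.2]{htt} for product preservation, and for $n<\infty$ it asserts that the proof of the latter lemma goes through step by step once one recalls that $n$-topoi are left exact localizations of presheaf categories valued in $(n-1)$-truncated spaces; Lurie's argument there reduces product preservation to the case of $\Tinf$ and then propagates it through presheaves and localizations. You instead give a self-contained internal argument: presentability yields the reflector, and product preservation follows from $\tau_{\leq k}\X$ being a reflective exponential ideal of a Cartesian closed $\infty$-category, using that internal homs with $k$-truncated target are $k$-truncated. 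This buys a cleaner and more general statement (it applies to any reflective exponential ideal closed under finite products in a Cartesian closed presentable $\infty$-category, with no site-theoretic reduction), and it sidesteps the paper's somewhat hand-wavy ``the proof extends to $n$-topoi'' step by working directly inside $\X$ (or inside an ambient $\infty$-topos $\Xh$ with $\X\simeq\tau_{\leq n-1}\Xh$, under which products and internal homs of $(n-1)$-truncated objects agree). You are also more careful than the paper on a small point: \cite[Proposition 5.5.6.18]{htt} is stated for \emph{presentable} $\infty$-categories, not ``all $\infty$-categories,'' and you correctly supply presentability of $n$-topoi before invoking it. The one step you leave implicit --- that the chain of equivalences of corepresented functors is induced by the canonical map $\tau_{\leq k}(X\times Y)\to\tau_{\leq k}X\times\tau_{\leq k}Y$ before applying Yoneda --- is a routine naturality check and does not affect correctness. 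Your closing remark that one cannot shortcut via left exactness of $\tau_{\leq k}$ is apt.
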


\begin{proof}
	In the case of $n=\infty$, this is the content of \cite[Proposition 5.5.6.18, Lemma 6.5.1.2]{htt}. In the case of $n<\infty$, the fact that the inclusion $\tau_{\leq m}\X\subseteq\X$ admits a left adjoint still follows from \cite[Proposition 5.5.6.18]{htt} (which holds for all $\infty$-categories) but we need to be slightly more careful with asking that this adjoint preserve products, as \cite[Lemma 6.5.1.2]{htt} only applies to $\infty$-topoi as written. However, we can repeat the proof of the above cited lemma for $n$-topoi, making the appropriate changes, as they are also left exact localizations of presheaf $\infty$-categories valued in $(n-1)$-truncated spaces (by \cite[Definition 6.4.1.1]{htt}).
\end{proof}

\begin{rem}
	There are several edge cases in the above proposition that it may be helpful to clarify. In the case that $m\geq n-1$, both the inclusion $\tau_{\leq m}\X\subseteq\X$ and its left adjoint are the identity functor. In the case that $m=-2$, the $\infty$-category $\tau_{\leq m}\X$ is the terminal $\infty$-category (the unique $(-1)$-topos) and the inclusion $\tau_{\leq m}\X\subseteq\X$ is the functor picking out the terminal object of $\X$ whose left adjoint is the terminal functor. In the case that $m=-1$, $\tau_{\leq m}\X$ is the poset of subobjects of the terminal object of $\X$ (recall from \cite[Section 6.4.2]{htt} that $0$-topoi are precisely \textit{locales}). 
\end{rem}

\begin{defi}
	We write $\tau_{\leq m}\colon \X\to \tau_{\leq m}\X$ for the left adjoint to the inclusion $\tau_{\leq m}\X\subseteq \X$ obtained from Proposition \ref{prop: truncfunc}. When there is any chance of confusion we will write $\tau_{\leq m}^{\XX}$, as we will need to consider truncation functors of different $\infty$-categories at the same time. 
\end{defi}

\begin{rem}\label{rem:truncfunccommute}
We will often use the fact that ``truncation functors commute.'' In other words, given a presentable $\infty$-category $\Cinf$ and any $m,n\geq -2$ there is an equivalence $\tau_{\leq m}^{\tau_{\leq n}\Cinf}\tau_{\leq n}\Cinf\simeq\tau_{\leq n}^{\tau_{\leq m}\Cinf}\tau_{\leq m}\Cinf$. This follows from the definition of truncation and uniqueness of adjoints.
As a result, in what follows we will never write $\tau_{\leq m}^{\tau_{\leq n}\Cinf}$ to indicate the $m$-truncation functor applied to the $\infty$-category of $n$-truncated objects, but just write $\tau_{\leq m}$ (or $\tau_{\leq m}^{\Cinf}$) regardless of whether it is being applied to $\Cinf$ or $\tau_{\leq n}\Cinf$. 
\end{rem}

\begin{rem}
For any presentable $\infty$-category $\Cinf$, it is always true that the inclusion of the $m$-truncated objects of $\Cinf$ admits a left adjoint. However, Proposition \ref{prop: truncfunc} implies that whenever $\Cinf$ is an $n$-topos, that left adjoint is symmetric monoidal with respect to Cartesian symmetric monoidal structure, which will be essential going forward.
\end{rem}

\begin{defi}\label{def: connective for n-topoi}
	Let $\X$ be an $n$-topos for $0\leq n\leq\infty$. Then we say that an object $X\in\X$ is $m$-\emph{connective} if $\tau_{\leq m-1}X$ is a terminal object of $\X$. We denote the $m$-connective objects of $\X$ by $\X^{\geq m}$. We sometimes say \textit{connected} in place of $1$-connective. 
\end{defi}

\begin{rem}
	In the case that $\XX$ is an $\infty$-topos, our Definition \ref{def: connective for n-topoi} of $k$-connective objects is different than that given in \cite[Definition 6.5.1.10]{htt} but is equivalent as a result of \cite[Proposition 6.5.1.12]{htt}.
\end{rem}

\begin{lem}\label{lemma: truncation preserves connective}
	Let $\X$ be an $n$-topos for $0\leq n\leq\infty$ and let $X\in\X$ be $k$-connective. Then $\tau_{\leq m}X$ is $k$-connective for any $m\geq -2$. 
\end{lem}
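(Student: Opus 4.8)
The plan is to reduce everything to the single formal fact that truncation functors compose as $\tau_{\leq a}\circ\tau_{\leq b}\simeq\tau_{\leq\min(a,b)}$ for all $a,b\geq -2$, together with the observation (immediate from Proposition~\ref{prop: truncfunc}) that each $\tau_{\leq j}$ preserves terminal objects since it preserves finite products.

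First I would record the composition property. For $b\leq a$ it is trivial: $\tau_{\leq b}X$ is $b$-truncated, hence $a$-truncated, so the unit $\tau_{\leq b}X\to\tau_{\leq a}(\tau_{\leq b}X)$ is an equivalence. For $a\leq b$ one checks directly that $\tau_{\leq a}(\tau_{\leq b}X)$ has the universal property of $\tau_{\leq a}X$: given any $a$-truncated (hence $b$-truncated) object $Z$ equipped with a map from $X$, that map factors uniquely through $\tau_{\leq b}X$ and then, since $Z$ is $a$-truncated, uniquely through $\tau_{\leq a}(\tau_{\leq b}X)$; so the canonical comparison $\tau_{\leq a}X\to\tau_{\leq a}(\tau_{\leq b}X)$ is an equivalence. (Alternatively one can cite the analogous statement from \cite{htt}.) This argument works in any $\infty$-category possessing the relevant truncation functors, in particular in any $n$-topos by Proposition~\ref{prop: truncfunc}.

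Then the statement itself follows by a short computation. By Definition~\ref{def: connective for n-topoi} we must show that $\tau_{\leq k-1}(\tau_{\leq j}X)$ is a terminal object of $\X$. By the composition property this object is $\tau_{\leq\min(k-1,j)}X$. If $j\geq k-1$, then $\min(k-1,j)=k-1$, so $\tau_{\leq k-1}(\tau_{\leq j}X)\simeq\tau_{\leq k-1}X$, which is terminal because $X$ is $k$-connective. If $j<k-1$, then $\min(k-1,j)=j$, and using the composition property again we get $\tau_{\leq j}X\simeq\tau_{\leq j}(\tau_{\leq k-1}X)\simeq\tau_{\leq j}(1_{\X})\simeq 1_{\X}$, where the middle equivalence is the $k$-connectivity of $X$ and the last is the preservation of terminal objects by $\tau_{\leq j}$. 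In either case $\tau_{\leq k-1}(\tau_{\leq j}X)$ is terminal, which is what we wanted.

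Since $k$-connectivity is only defined for $k\geq -1$, the hypotheses guarantee $k-1\geq -2$ and $j\geq -2$, so every truncation functor invoked above is legitimate and no genuine obstacle arises. The only point that requires a modicum of care is that the composition property and the preservation of the terminal object hold in an $n$-topos for finite $n$ as well; but both are covered once one knows that each $\tau_{\leq j}$ exists and preserves finite products there, which is exactly the content of Proposition~\ref{prop: truncfunc}.
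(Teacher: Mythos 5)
Your proof is correct, and it is worth comparing with the paper's, which is a two-line argument: ``$\tau_{\leq k-1}X$ is final, and $\tau_{\leq j}$ is a left adjoint, hence preserves final objects.'' As written, that argument produces the statement that $\tau_{\leq j}(\tau_{\leq k-1}X)$ is terminal, whereas what Definition~\ref{def: connective for n-topoi} actually demands is that $\tau_{\leq k-1}(\tau_{\leq j}X)$ be terminal; the two are identified only via the composition law $\tau_{\leq a}\tau_{\leq b}\simeq\tau_{\leq\min(a,b)}$, which the paper leaves implicit. You make exactly this missing step explicit, prove the composition law from the universal property (it is also \cite[Proposition 5.5.6.28]{htt} in the $\infty$-topos case), and then run the clean case split on $j\geq k-1$ versus $j<k-1$. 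You also give the right reason that $\tau_{\leq j}$ preserves the terminal object --- it preserves finite products by Proposition~\ref{prop: truncfunc}, the empty product being terminal --- which is preferable to the paper's appeal to $\tau_{\leq j}$ being a left adjoint (left adjoints preserve \emph{initial} objects and colimits; preservation of the terminal object is not automatic). So your argument is the same in spirit but is the more complete version: it buys a fully justified proof at the cost of one extra formal lemma about composing truncations.
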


\begin{proof}	
	
	Because $X$ is $k$-connective, we have that $\tau_{\leq k-1}X$ is terminal in $\X$. The result follows from noticing that $\tau_{\leq m}$ is a left adjoint, so it preserves terminal objects, and that truncations commute as noticed in Remark \ref{rem:truncfunccommute}.  
\end{proof}

\begin{lem}\label{lem: truncatedequivalence}
Suppose that $F\colon\Cinf\to\mathcal{D}$ and $G\colon\mathcal{D}\to\Cinf$ are inverse equivalences of presentable $\infty$-categories. Then for all $n$ there are induced functors $\tau_{\leq n}F\colon \tau_{\leq n}\Cinf\to\tau_{\leq n}\mathcal{D}$ and $\tau_{\leq n}G\colon \tau_{\leq n}\mathcal{D}\to\tau_{\leq n}\Cinf$ which are also inverse equivalences. 
\end{lem}

\begin{proof}
The existence of the restricted functors follows from \cite[5.5.6.16]{htt} and the fact that equivalences always preserve all small limits and colimits (by \cite[5.3.2.4]{htt} and its dual). The result then follows from \cite[5.5.6.28]{htt}.  
\end{proof}

\begin{lem}\label{lem: loopsBshifting}
Let $\XX$ be an $\infty$-topos and let X be a pointed and $n$-truncated object in $\XX$. Then $\Omega^k X$, the underlying object of $\C^kX$, is $(n-k)$-truncated. On the other hand, if $G\in \mathcal{Grp}_{\Ek}(\XX)$ and $G$ is $n$-truncated as an object of $\XX$ then $\B^kG$ is $(n+k)$-truncated as an object of $\XX$. 
\end{lem}

\begin{proof}
For the first claim, suppose that $X\in\XX_\ast$ and that $X$ is $n$-truncated as an object of $\XX$. Then by definition we have that $\XX(Y,X)$ is an $n$-truncated space for all $Y\in\XX$. Moreover, we have $\XX(Y,\Omega^k X)\simeq \Omega^k \XX(Y,X)$, which is $(n-k)$-truncated. Therefore $\Omega^k X$ is $(n-k)$-truncated. 

On the other hand, suppose $G\in\mathcal{Grp}_{\Ek}(\XX)$ and that $G$ is $n$-truncated as an object of $\XX$. From the equivalence in Theorem \ref{thm: Koszul of gp like alg in Lurie}, we have that $\Omega^k \B^kG\simeq G$. Therefore $\XX(Y, G)\simeq \XX(Y,\Omega^k \B^kG)\simeq \Omega^k\XX(Y,\B^kG)$ is $n$-truncated, from which it follows that $\XX(Y, \B^k G)$ must be $(n+k)$-truncated (since it is a space). 
\end{proof}

\begin{lem}\label{lem:monadalgebratruncation}
	Let $\Cinf$ be an $\infty$-category and $T\colon \Cinf\to\Cinf$ a monad on $\Cinf$ with $\infty$-category of algebras $\Alginf_T(\Cinf)$. Then there is an equivalence: 
	\[
	\tau_{\leq n}\Alginf_T(\Cinf)\simeq \Alginf_T(\Cinf)\times_{\Cinf}\tau_{\leq n}\Cinf,
	\]
	where the right hand side is the pullback in $\Catinf$ of the forgetful functor $\Alginf_T(\Cinf)\to\Cinf$ along the inclusion $\tau_{\leq n}\Cinf\hookrightarrow\Cinf$.
\end{lem}

\begin{rem}
For aesthetic reasons, we will denote the $\infty$-category $\Alginf_T(\Cinf)\times_{\Cinf}\tau_{\leq n}\Cinf$ by $\Alginf_T(\Cinf)_{\leq n}$. This is the full subcategory of $\Alginf_T(\Cinf)$ on the objects which are $n$-truncated as objects of $\Cinf$. Note that it does not automatically follow that $\Alginf_T(\Cinf)_{\leq n}\simeq \Alginf_T(\tau_{\leq n}\Cinf)$ (indeed we don't even know that $T$ always descends to a monad on $\tau_{\leq n}\Cinf$). However in the cases of interest to us this will be true, as a result of Lemma \ref{lem: lmodequiv}. 
\end{rem}

\begin{proof}
	
	Note that both sides of the equivalence are full subcategories of $\Alginf_T(\Cinf)$ (since fully faithful functors are stable under pullback by \cite[24.12]{joyalnotes}) so it suffices to show that they have the same objects. Suppose then that $X\in Alg_T(\Cinf)$ and that $X$ is $n$-truncated as an object of $\Alginf_T(\Cinf)$, i.e.~$X\in\tau_{\leq n}\Alginf(\Cinf)$. We will write $F_T$ for the free $T$-algebra functor which is left adjoint to the forgetful functor $U_T\colon\Alginf_T(\Cinf)\rightarrow \Cinf$.  
	Then for any $T$-algebra $Y$ the space $\Alginf_T(\Cinf)(Y,X)$ is $n$-truncated. In particular, $\Alginf_T(\Cinf)(F_T(Z),X)$ is $n$-truncated for all $Z\in\Cinf$. Therefore, since $F_T$ is left adjoint to $U_T$, the space $\Cinf(Z,U_T(X))$ is $n$-truncated for all $Z\in\Cinf$. In other words, the $T$-algebra $X$ is $n$-truncated as an object of $\Cinf$, so $X\in \Alginf_{T}(\Cinf)_{\leq n}$. 
	
	On the other hand, suppose $X\in \Alginf_T(\Cinf)$ and $U_T(X)$ is $n$-truncated, i.e.~$X\in \Alginf_{T}(\Cinf)_{\leq n}$. As a result of  \cite[Lemma 4.7.3.14]{ha} we have that every $T$ algebra is the colimit of a simplicial object which is levelwise a free $T$-algebra (note that an explicit description of this simplicial object, which is the classical monadic bar resolution, can be extracted from \cite[Example 4.7.2.7]{ha}). If $Y$ is a $T$-algebra in $\Cinf$ denote its canonical simplicial resolution by $\B_T(Y)_\bullet$ where $\B_T(Y)_i$ is a free $T$-algebra. Then we have: 
	\[ \Alginf_T(\Cinf)(Y,X)\simeq \Alginf_T(\Cinf)(\colim_i \B_T(Y)_i,X)\simeq \lim_i \Alginf_T(\Cinf)(\B_T(Y)_i,X).  \]
	By the same argument given in the first half of the proof, $\Alginf_T(\Cinf)(\B_T(Y)_i,X)$ is $n$-truncated for each $i$. Finally, $n$-truncated spaces are closed under all small limits (as a result of \cite[Proposition 5.5.6.5]{htt} and the fact that $\Tinf$ is complete), giving the result. 
\end{proof}

\begin{lem}\label{lem: lmodequiv}
Let $\Cinf$ be a monoidal $\infty$-category and $A\in\Alginf(\tau_{\leq n})\subseteq\Alginf(\Cinf)$. Then there is an equivalence of $\infty$-categories $\tau_{\leq n}\lmodinf_A(\Cinf)\simeq \lmodinf_A(\tau_{\leq n}\Cinf)$.
\end{lem}

\begin{proof}
	Consider the commutative square of $\infty$-categories
	
	\[
	\begin{tikzcd}
		\lmodinf(\tau_{\leq n}\Cinf)\arrow[r,hookrightarrow]\ar[d] & \lmodinf(\Cinf)\ar[d]\\
		\Alginf(\tau_{\leq n}\Cinf)\ar[r, hookrightarrow] & \Alginf(\Cinf)
	\end{tikzcd}
	\]
	in which the horizontal functors are inclusions of full subcategories. Recall that beacuse fully faithful functors of $\infty$-categories form the right class of an orthogonal factorization system on $\Catinf$ (cf.~\cite[24.12]{joyalnotes}) they are stable under all limits in the arrow $\infty$-category $\Catinf^{\Delta^1}$, by \cite[Proposition 5.2.8.6]{htt}. Therefore, pulling back the upper horizontal morphism in the preceding diagram along the morphism (in the arrow category)
	\[
	\begin{tikzcd}
		\{A\}\ar[r]\ar[d,equal] & \Alginf(\tau_{\leq n}\Cinf)\ar[d, hookrightarrow]\\
		\{A\}\ar[r] & \Alginf(\Cinf)
	\end{tikzcd}
	\]
	yields a fully faithful inclusion $\lmodinf_{A}(\tau_{\leq n}\Cinf)\hookrightarrow \lmodinf_{A}(\Cinf)$. Here we are using the fact that the $\infty$-category $\lmodinf_A(\Cinf)$ (resp.~$\lmodinf_A(\tau_{\leq n}\Cinf)$) is defined to be the pullback of the Cartesian fibration $\lmodinf(\Cinf)\to \Alginf(\Cinf)$ (resp.~$\lmodinf(\tau_{\leq n}\Cinf)\to \Alginf(\tau_{\leq n}\Cinf)$) along the inclusion of $A$ (cf.~\cite[Definition 4.2.1.13]{ha}).
	
	On the other hand, by Lemma \ref{lem:monadalgebratruncation}, we have the pullback square
	
	\[
	\begin{tikzcd}
	\tau_{\leq n}\lmodinf_{A}(\Cinf)\ar[r,hookrightarrow]\ar[d] & \lmodinf_{A}(\Cinf)\ar[d]\\
	\tau_{\leq n}\Cinf\ar[r,hookrightarrow] & \Cinf
	\end{tikzcd}
	\]
	in which the upper horizontal functor is fully faithful because fully faithful functors are stable under pullback (again by combining \cite[24.12]{joyalnotes} with \cite[Proposition 5.2.8.6]{htt}). Therefore we have two full subcategories of $\lmodinf_A(\Cinf)$, namely $\tau_{\leq n}\lmodinf_{A}(\Cinf)$ and $\lmodinf_{A}(\tau_{\leq n}\Cinf)$ which will be equivalent so long as they contain the same objects. From the equivalence $\tau_{\leq n}\lmodinf(\Cinf)\simeq \lmodinf_A(\Cinf)\times_{\Cinf}\tau_{\leq n}\Cinf$, we know that an object in $\tau_{\leq n}\lmodinf(\Cinf)$ is exactly the data of a morphism of $\infty$-operads $\mathcal{LM}^\otimes\to \Cinf^\otimes$ which takes $\mathfrak{a}\in\mathcal{LM}^\otimes$ to $A$ and $\mathfrak{m}\in \mathcal{LM}^\otimes$ to an object of $\tau_{\leq n}\Cinf$. Because $A\in\tau_{\leq n}\Cinf$, this is the same data of a morphism of $\infty$-operads $\mathcal{LM}^\otimes \to \tau_{\leq n}\Cinf^\otimes$. 
\end{proof}

\begin{rem}
	Recall for the following proposition (see the discussion preceding Theorem \ref{thm: Koszul of gp like alg in Lurie}) that for an $\infty$-category $\Cinf$ with all finite products, we use $\Grp_{\Ek}(\Cinf)$ to denote the category of \textit{grouplike} $\Ek$-monoids in $\Cinf$, as defined in \cite[5.2.6.6]{ha}.
\end{rem}

\begin{prop}\label{prop: truncated groups}
For a presentable $\infty$-category $\Cinf$, there is an equivalence of $\infty$-categories: $\tau_{\leq n}\Grp_{\Ek}(\Cinf)\simeq \Grp_{\Ek}(\tau_{\leq n}\Cinf)$. 
\end{prop}

\begin{proof}
This proposition could be proven similarly to Lemma \ref{lem: lmodequiv} but it admits another somewhat more interesting proof. Note that $\Ek$-group objects in $\Tinf$ are monadic over $\Tinf$. This follows from an application of the Barr-Beck theorem which, in the case of $\infty$-categories, is \cite[Theorem 4.7.3.5]{ha}. From \cite[Corollary 5.2.6.18]{ha} we have that the forgetful functor $\mathcal{Grp}_{\Ek}(\Tinf)\to\Tinf$ is conservative and preserves sifted colimits, so in particular preserves geometric realizations (cf.~\cite[Corollary 5.5.8.4]{htt}). We additionally need the somewhat classical fact that $k$-connective spaces are closed under geometric realization (see, for instance, \cite[Lemma 2.4]{ebertsemisimplicial}).

Now notice that, again since the forgetful functor preserves sifted colimits, $\Ek$-group objects in $\Tinf$ are in fact \textit{algebraic} over $\Tinf$ by \cite[Theorem B.7]{gepnerinfiniteloopspacemachines}. In other words, $\Ek$-group objects in $\Tinf$ are the algebras for an $\infty$-categorical Lawvere theory (an explicit description of this theory when $k=1$ is given in \cite[Corollary 5.2.6.21]{ha}). From \cite[Proposition B.3]{gepnerinfiniteloopspacemachines} it then follows that $\mathcal{Grp}_{\Ek}(\Cinf)\simeq\Cinf\otimes\mathcal{Grp}_{\Ek}(\Tinf)$, where the tensor product is that of presentable $\infty$-categories, $\mathcal{Pr}^L$, described in \cite[Proposition  4.8.1.15]{ha}. Combining this with the facts that $\tau_{\leq n}\Dinf\simeq\Dinf\otimes\tau_{\leq n}\Tinf$ for any presentable $\infty$-category $\Dinf$ (cf.~\cite[Example 4.8.1.22]{ha}) and that the tensor product of $\mathcal{Pr}^L$ is associative and commutative, we have:
\begin{align*}
\tau_{\leq n}\mathcal{Grp}_{\Ek}(\Cinf) & \simeq \mathcal{Grp}_{\Ek}(\Cinf)\otimes\tau_{\leq n}\Tinf\\
&\simeq \Cinf\otimes\mathcal{Grp}_{\Ek}(\Tinf)\otimes\tau_{\leq n}\Tinf\\
&\simeq \Cinf\otimes\tau_{\leq n}\Tinf\otimes\mathcal{Grp}_{\Ek}(\Tinf)\\
&\simeq \tau_{\leq n}\Cinf\otimes\mathcal{Grp}_{\Ek}(\Tinf)\\
&\simeq \mathcal{Grp}_{\Ek}(\tau_{\leq n}\Cinf). \qedhere
\end{align*}
\end{proof}

\begin{rem}
It should be possible to prove Lemma \ref{lem: lmodequiv} from the Lawvere theory viewpoint of the proof of Proposition \ref{prop: truncated groups}. Unfortunately there does not yet exist, to the knowledge of the authors, a theory of multi-sorted $\infty$-categorical Lawvere theories, which would be necessary to discuss the theory whose algebras are pairs $(A,M)$ where $A$ is a monoid and $M$ is a module over it. 
\end{rem}

We now state the result which is an $n$-toposic version of Lurie's Theorem \ref{thm: Koszul of gp like alg in Lurie}.

\begin{thm}\label{cor: cobar in n-topoi}
Let $\XX$ be an $n$-topos endowed with the Cartesian symmetric mono\-idal structure. 
Then the cobar construction induces an equivalence between $k$-connective coaugmented $\mathbb{E}_\infty$-coalgebras in $\XX$ and $\Ek$-group objects in $\tau_{\leq n-k-1}\XX$:
\[
\C^k:\XX_\ast^{\geq k}\simeq\coalginf^{coaug}_{\mathbb{E}_\infty}(\XX^{\geq k})\stackrel{\simeq}\longrightarrow \mathcal{Grp}_{\Ek}(\tau_{\leq n-k-1}\XX).
\]
\end{thm}

\begin{proof}
Let $\Xh$ be an $\infty$-topos whose truncation $\tau_{\leq n-1}\Xh$ is equivalent to the $n$-topos $\XX$. By Lemma \ref{lem: truncatedequivalence}, the following diagram commutes up to equivalence:

\[
\begin{tikzcd}
\mathscr{Y}_\ast^{\geq k}  \arrow[r, "\C^k"] \arrow[d, swap, "\tau_{\leq n-k-1}^{\mathscr{Y}^{\geq k}_\ast}" ] &  \mathcal{Grp}_{\Ek}(\mathscr{Y})\arrow[d,"\tau_{\leq n-k-1}^{\mathcal{Grp}_{\Ek}(\mathscr{Y})}"] \\
\tau_{\leq n-k-1}(\mathscr{Y}_\ast^{\geq k}) \arrow[r, "\C^k"] &  \tau_{\leq n-k-1}\mathcal{Grp}_{\Ek}(\mathscr{Y}).
\end{tikzcd}
\]

Moreover the bottom horizontal functor $\C^k$ is still an equivalence with inverse equivalence $\B^k$ (suitably restricted). Additionally, by Proposition \ref{prop: truncated groups}, the bottom right hand corner of the above diagram is equivalent to $\mathcal{Grp}_{\Ek}(\tau_{\leq n-k-1}\XX)$. Thus it only remains to show that the bottom left hand corner of the above diagram, $\tau_{\leq n-k-1}(\mathscr{Y}^{\geq k}_\ast)$, is equivalent to $(\tau_{\leq n-1}\YY)^{\geq k}_\ast\simeq\XX^{\geq k}_\ast$. Note that taking the full subcategory of $(n-k-1)$-truncated objects of $\YY^{\geq k}_\ast$ is not equivalent to taking pointed connected objects in the subcategory of $(n-k-1)$-truncated objects. 

The $\infty$-category of pointed $k$-connective objects of $\XX$, $\XX_\ast^{\geq k}$, can be defined as the pullback of the forgetful functor $\YY^{\geq k}_\ast\to\YY$ along the inclusion of the full subcategory $\XX\simeq \tau_{\leq n-1}\YY\hookrightarrow \mathscr{Y}$:

\[\begin{tikzcd}
	{\mathscr{X}^{\geq k}_\ast} & {\mathscr{Y}^{\geq k}_\ast} \\
	{\mathscr{X}} & {\mathscr{Y}}
	\arrow["{i}", from=2-1, to=2-2, hook]
	\arrow["{U}", from=1-2, to=2-2]
	\arrow[from=1-1, to=1-2, hook]
	\arrow["{U}"', from=1-1, to=2-1]
	\arrow["\lrcorner"{pos=0, rotate=0}, from=1-1, to=2-2, phantom].
\end{tikzcd}\]
In other words, the full subcategory of $\YY^{\geq k}_\ast$ on those objects whose underlying $\YY$-object is $(n-1)$-truncated is equivalent to the $\infty$-category of pointed $k$-connective objects of $\XX$. Recall that fully faithful functors form the right class of an orthogonal factorization system on $\Catinf$, so they are stable under pullback (cf.~\cite[24.12]{joyalnotes}). 
Therefore the induced map $\XX^{\geq k}_\ast\hookrightarrow \YY^{\geq k}_\ast$ is full and faithful.  

By definition there is an inclusion of a full subcategory $\tau_{\leq n-k-1}(\YY^{\geq k}_\ast)\hookrightarrow\YY^{\geq k}_\ast$. If we compose this functor with the forgetful functor $\YY^{\geq k}_\ast\to \YY$, it factors through $\XX\simeq \tau_{\leq n-1}\YY$. To see this, note that given any $X\in\tau_{\leq n-k-1}(\YY^{\geq k}_\ast)$ we have $X\simeq \B^k\C^k X$. Since $X\in\tau_{\leq n-k-1}(\YY^{\geq k}_\ast)$, we know that $\C^k X\in\tau_{\leq n-k-1}\mathcal{Grp}_{\Ek}(\YY)\simeq \mathcal{Grp}_{\Ek}(\tau_{\leq n-k-1}\YY)$, i.e.~$\Omega^k X$ is $(n-k-1)$-truncated in $\YY$. Then by Lemma \ref{lem: loopsBshifting} we know that $\B^k\C^k X$ is $(n-1)$-truncated. This factorization and the above described inclusion then imply, by the universal property of the pullback, the existence of the dotted functor below:

\[\begin{tikzcd}
	{\tau_{\leq n-k-1}(\mathscr{Y}^{\geq k}_\ast)} \\
	& {\mathscr{X}^{\geq k}_\ast} & {\mathscr{Y}^{\geq k}_\ast} \\
	& {\mathscr{X}} & {\mathscr{Y}}
	\arrow["{i}", from=3-2, to=3-3, hook]
	\arrow["{U}", from=2-3, to=3-3]
	\arrow[from=2-2, to=2-3, hook]
	\arrow["{U}"', from=2-2, to=3-2]
	\arrow["\lrcorner"{very near start, rotate=0}, from=2-2, to=3-3, phantom]
	\arrow[from=1-1, to=2-2, dotted]
	\arrow[from=1-1, to=2-3, curve={height=-12pt}, hook]
	\arrow[from=1-1, to=3-2, curve={height=12pt}]
\end{tikzcd}\]
Again using the fact that fully faithful functors of $\infty$-categories are the right class of an orthogonal factorization system and therefore satisfy a 2-out-of-3 property, the dotted arrow above is full and faithful. In other words, $\tau_{\leq n-k-1}(\YY^{\geq k}_{\ast})$ is (equivalent to) a full subcategory of $\XX_\ast^{\geq k}$. It then remains to show that this functor is also essentially surjective. 

We need to show that, given an object $X\in\XX_\ast^{\geq k}$, it is equivalent to an object of $\tau_{\leq n-k-1}(\YY_\ast^{\geq k})$. To do this, we first consider $X$ as an object of $\YY_\ast^{\geq k}$ i.e.~as a pointed connected object of $\YY$ whose underlying $\YY$-object is $(n-1)$-truncated. Therefore $\C^k X\in \mathcal{Grp}(\tau_{n-k-1}\XX)\subseteq\mathcal{Grp}(\YY)$ (by Lemma \ref{lem: loopsBshifting}). It then follows that $X\simeq \B^k\C^k X\in\tau_{\leq n-k-1}(\YY^{\geq k}_\ast)\subseteq\YY^{\geq k}_\ast$. 
\end{proof}

\begin{cor}\label{cor: intervaltypes}
	Let $a<b$ be non-negative integers, and let $[a,b]$ denote the interval of integers between them, including the endpoints. Then there is an equivalence of $\infty$-categories of $a$-connective $b$-groupoids (i.e.~spaces whose homotopy groups are concentrated in degrees $a$ through $b$), and $\mathbb{E}_a$-group objects in $\tau_{\leq b-a}\Tinf$. 
\end{cor}

\begin{proof}
	This is the application of Theorem \ref{cor: cobar in n-topoi} in the case that $n=b+1$ and $k=a$ in Theorem \ref{cor: cobar in n-topoi}.
\end{proof}

If we refer to the objects of $\tau_{\leq b}\Tinf^{\geq a}$ as \textit{homotopy $[a,b]$-types} then Theorem \ref{cor: cobar in n-topoi} and Corollary \ref{cor: intervaltypes} can be interpreted as giving ``algebraic models'' for homotopy $[a,b]$-types (or sheaves thereof), i.e.~homotopy types whose homotopy groups are concentrated in some interval $[a,b]$ of natural numbers. A list of classical results of this type can be found, for instance, in the introduction of \cite{garzonMirandaSerre}. However, this statement needs to be taken with a grain of salt. For instance, one such result is the identification of the homotopy category of $[0,2]$-types with either a derived category of crossed modules or a derived category of (strict, discrete) 2-groupoids (cf.~\cite{whiteheadCombHomI,whiteheadCombHomII,noohi2groupoids}). The content of Theorem \ref{cor: cobar in n-topoi} in this case is trivial. Up to equivalence of $\infty$-categories, it identifies the 3-topos $\tau_{\leq 2}\Tinf$ with itself. In other words, the description of $[0,2]$-types as, say, certain kinds of crossed modules, cannot be recovered automatically from the above theorem. This is the case for $[0,b]$-types for any $b\geq 0$. When $a\neq 0$ however, slightly more interesting descriptions result, some of which we document here.

\begin{ex}
Setting $n=2$, $k=1$ in Theorem \ref{cor: cobar in n-topoi} gives the well known looping-delooping equivalence between $\mathbb{E}_1$-objects in $\tau_{\leq 0}\Tinf$, i.e.~discrete groups, and pointed $[1,1]$-types.
\end{ex}

\begin{ex}\label{ex: browngilbert}
Setting $n=4$, $k=2$ gives an equivalence between pointed $[2,3]$-types and $\mathbb{E}_2$-groups in groupoids. The latter are sometimes referred to as \textit{braided categorical groups} in the literature, and this is a classical result (the authors have not been able to find a reference containing a proof of this fact, but it is stated in the introduction of \cite{garzonMirandaCatGroups} and attributed to Brown and Gilbert \cite{brownGilbertAlgebraic}). Similarly to the case of $[0,n]$-types however, our theorem does not recover the description of such objects in terms of explicit braided crossed modules of groups as in \cite{brownGilbertAlgebraic}. 
\end{ex}

\begin{ex}\label{ex: bullejos}
	Consider pointed $[5,6]$-types, so that $k=5$ and $n=7$. Then we obtain an equivalence between such spaces and $\mathbb{E}_5$-groups in groupoids. Of course groupoids form a 2-topos, hence are a 2-category and by the Baez-Dolan Stabilization Hypothesis (cf.~\cite[Example 5.1.2.3]{ha} for a description of this in the context of $\infty$-categories), $\mathbb{E}_5$-groups in groupoids are symmetric monoidal groupoids. In general, whenever $n>2$, the $\infty$-category of pointed $[n,n+1]$-types is equivalent to the $\infty$-category of $\mathbb{E}_\infty$-groups in groupoids. This should be compared with the main theorem of \cite{bullejosSymmetricCatGroups}, which says that $[n,n+1]$-types with $n>2$ are modeled by symmetric monoidal groupoids whose objects form a group.
\end{ex}

\begin{ex}
	Similarly to the above example, if $a>b-a+1$, then $[a,b]$-types are equivalent to infinite loop spaces whose homotopy groups are concentrated in degrees 0 through $b-a$.
\end{ex}

\begin{rem}
	One addition to the literature that our theorem does provide is a general description of \textit{(pre)sheaves} of $[a,b]$-types. All of the classical results described above only consider subcategories of $\Tinf$. 
\end{rem}

\section{Koszul Duality in Higher Topoi for Comodules and Modules}

We now state a ``categorification'' of Theorem \ref{cor: cobar in n-topoi} in that it relates $\infty$-categories of modules and comodules rather than algebras and coalgebras. 

\begin{thm}\label{theorem: mainthmn}\label{theorem: mainthminfty}
Let $\infty\geq n \geq 0$ and let  $\XX$ be an $n$-topos equipped with the Cartesian symmetric monoidal structure.
If $X$ is a pointed and $k$-connective object of $\XX$, then there is an equivalence of $\infty$-categories: 
	\[\mathcal{LCoMod}_{\Omega^{k-1}X}(\XX) \simeq\mathcal{LMod}_{\Omega^{k} X}(\XX).\] 
	Furthermore, if $X\simeq\pi^\ast(Y)$ for a pointed $(n-1)$-groupoid $Y$ such that the pointing of $X$ is equivalent to $1_\XX\simeq\pi^\ast(1_{\tau_{\leq n-1}\Tinf})\to \pi^\ast(Y)\simeq X$, both of the above $\infty$-categories are equivalent to $\Fun(\Omega^{k-1}Y,\XX)$. 
\end{thm}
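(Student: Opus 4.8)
The plan is to establish the two equivalences separately and then chain them. For the first equivalence $\mathcal{LCoMod}_{\Omega^{k-1}X}(\XX)\simeq\mathcal{LMod}_{\Omega^k X}(\XX)$, I would follow the classical Koszul-duality strategy: identify left comodules over the cocommutative coalgebra $C=\Omega^{k-1}X$ with a category of ``parametrized objects'' and then recognize that category as modules over the $\mathbb{A}_\infty$-algebra $\Omega C=\Omega^k X$. The key input here is the (yet-to-be-proved) slice description of comodules, Corollary \ref{corollary: slicecomodules}, which should say that for a coalgebra $C$ arising as an object of a Cartesian $\infty$-category via its diagonal, left comodules over $C$ are equivalent to the slice $\XX_{/C}$; combined with the fact that $\Omega^{k-1}X$ is pointed (it inherits the basepoint of $X$, and when $k\ge 2$ it is even connected), one then interprets $\XX_{/\Omega^{k-1}X}$ as objects acted on by the loop group $\Omega(\Omega^{k-1}X)=\Omega^k X$ via the $n$-toposic Grothendieck construction (Lemmas \ref{lemma: gengrothconstruction} and \ref{lemma: ngengrothconstruction}). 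Theorem \ref{thm: Koszul of gp like alg in Lurie} / Corollary \ref{cor: cobar in n-topoi} is what guarantees $\Omega^{k-1}X$ is, up to the relevant connectivity, the delooping of the group object $\Omega^k X$, so that the module-theoretic and slice-theoretic pictures genuinely match.

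For the ``furthermore'' clause, suppose $X\simeq\pi^\ast(Y)$ for an $(n-1)$-groupoid $Y$. Since $\pi^\ast\colon\tau_{\leq n-1}\Tinf\to\XX$ preserves finite limits (Notation, item (ix)–(x)), we get $\Omega^{k-1}X\simeq\pi^\ast(\Omega^{k-1}Y)$ and $\Omega^k X\simeq\pi^\ast(\Omega^k Y)$. The point is then that the slice $\XX_{/\pi^\ast(\Omega^{k-1}Y)}$ is computed by straightening: I would invoke the generalized (finite-$n$) straightening–unstraightening equivalence promised in the abstract — the statement that Lurie's straightening equivalence holds over an $(n-1)$-groupoid in any $n$-topos — to identify $\XX_{/\pi^\ast(Z)}\simeq\Fun(Z,\XX)$ for any $(n-1)$-groupoid $Z$, applied with $Z=\Omega^{k-1}Y$ (which is indeed an $(n-1)$-groupoid, being a limit of $(n-1)$-groupoids). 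Chaining with the first equivalence gives $\mathcal{LMod}_{\Omega^k X}(\XX)\simeq\mathcal{LCoMod}_{\Omega^{k-1}X}(\XX)\simeq\XX_{/\Omega^{k-1}X}\simeq\Fun(\Omega^{k-1}Y,\XX)$, which is exactly the asserted description.

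The steps, in order: (1) cite Corollary \ref{corollary: slicecomodules} to get $\mathcal{LCoMod}_{\Omega^{k-1}X}(\XX)\simeq\XX_{/\Omega^{k-1}X}$; (2) use the basepoint of $X$ to present $\Omega^{k-1}X$ as a pointed (connected, when $k\geq2$) object and, via Corollary \ref{cor: cobar in n-topoi}, as the delooping of the group object $\Omega^k X$; (3) apply the $n$-toposic Grothendieck construction (Lemmas \ref{lemma: gengrothconstruction}, \ref{lemma: ngengrothconstruction}) to identify $\XX_{/\Omega^{k-1}X}\simeq\mathcal{LMod}_{\Omega^k X}(\XX)$; (4) for the addendum, push $\pi^\ast$ through the loop functors and apply the generalized straightening equivalence over the $(n-1)$-groupoid $\Omega^{k-1}Y$ to get $\XX_{/\pi^\ast(\Omega^{k-1}Y)}\simeq\Fun(\Omega^{k-1}Y,\XX)$.

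I expect the main obstacle to be step (3) — making the Grothendieck-construction identification fully precise in the $n$-toposic setting, in particular handling the case $k=1$ where $\Omega^{k-1}X=X$ need only be pointed and $1$-connective rather than connected, so that ``modules over the loop group $\Omega X$'' must be interpreted carefully (the slice $\XX_{/X}$ decomposes over components of $X$, and one must check the $\mathbb{A}_\infty$-algebra $\Omega X$ genuinely recovers all of it). Ensuring that the monoidal/coalgebraic bookkeeping (the $\mathbb{E}_\infty$-coalgebra structure on $\Omega^{k-1}X$ versus the merely $\mathbb{A}_\infty$-algebra structure on $\Omega^k X$, and the compatibility of the two equivalences with these structures) is consistent throughout is the other delicate point; everything else is either a direct citation or a formal consequence of $\pi^\ast$ preserving finite limits.
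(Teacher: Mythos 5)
Your outline gets the overall shape right (comodules $\simeq$ slice $\simeq$ modules, then straightening for the addendum), and steps (1) and (4) match the paper: Corollary \ref{corollary: slicecomodules} gives $\mathcal{LCoMod}_{\Omega^{k-1}X}(\XX)\simeq\XX_{/\Omega^{k-1}X}$, and Lemmas \ref{lemma: gengrothconstruction}/\ref{lemma: ngengrothconstruction} handle the case $X\simeq\pi^\ast(Y)$. The paper also organizes the argument exactly as you suggest implicitly: it first proves the case $k=1$ and then reduces general $k$ to it by setting $X'=\Omega^{k-1}X$, which is connected precisely when $X$ is $k$-connective.

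The genuine gap is in your step (3). You attribute the identification $\XX_{/\Omega^{k-1}X}\simeq\mathcal{LMod}_{\Omega^k X}(\XX)$ to the $n$-toposic Grothendieck construction, i.e.\ to Lemmas \ref{lemma: gengrothconstruction} and \ref{lemma: ngengrothconstruction}. But those lemmas only identify $\XX_{/\pi^\ast(Z)}$ with $\Fun(Z,\XX)$ for $Z$ a connected $(n-1)$-\emph{groupoid}; they say nothing about a general pointed connected object of $\XX$, and a general $k$-connective $X$ in an $n$-topos (e.g.\ a nonconstant sheaf) is not of the form $\pi^\ast(Z)$. So your argument only covers the ``furthermore'' clause, not the main equivalence. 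The missing ingredient is Lurie's theorem that $\mathcal{LMod}_{\Omega X}(\X)\simeq\X_{/X}$ for \emph{any} pointed connected object of an $\infty$-topos (\cite[5.2.6.28]{ha}, restated as Lemma \ref{lemma: dkfresult}), together with its extension to $n$-topoi (Corollary \ref{cor: dkfresult of n topoi}, proved by passing to an $n$-localic $\infty$-topos and truncating). This is the real engine of the theorem and cannot be replaced by the straightening lemmas; conversely, your appeal to Theorem \ref{thm: Koszul of gp like alg in Lurie}/Corollary \ref{cor: cobar in n-topoi} to exhibit $\Omega^{k-1}X$ as a delooping of $\Omega^k X$ is not needed once Lemma \ref{lemma: dkfresult} is in hand. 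A small additional point: your worry about $k=1$, where $X$ is ``$1$-connective rather than connected,'' is vacuous --- in the paper's Definition \ref{def: connective for n-topoi} these are the same condition, which is exactly what makes the reduction to $k=1$ clean.
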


\begin{rem}
	Note that, in the above theorem, we could have equivalently written $\mathcal{RCoMod}$ or $\mathcal{CoMod}$ in place of $\mathcal{LCoMod}$ because every object in the Cartesian monoidal structure is a cocommutative coalgebra (see Corollary \ref{corollary: slicecomodules} below), but we cannot make the same simplifications for the $\infty$-categories of modules. 
\end{rem}

\noindent We provide some examples below of the implications of this theorem in familiar settings:

\begin{ex}
	Let $\X$ be a $1$-topos of sheaves of sets on a site. Then the only connected object is the constant sheaf valued in the one element set, i.e.~the terminal object $1_\X$. This sheaf is also the pullback of the one element set along the geometric morphism $\pi\colon \X\to \mathcal{Set}$, so we have the (unsurprising) equivalence: \[
	\X\simeq\X_{/1_\X}\simeq\comodinf_{1_\X}(\X)\simeq \lmodinf_{1_\X}(\X).
	\]
	In other words, Koszul duality is not interesting in the classical, underived setting. 
\end{ex}

\begin{ex}
	Let $\mathcal{Gpd}$ be the $2$-topos of groupoids (i.e.~homotopy 1-types). Then the $1$-connective objects are precisely the objects of the form $BG$, or $K(G,1)$, for $G$ a discrete group, thought of as a one object groupoid with morphisms equivalent to $G$. Then Theorem \ref{theorem: mainthminfty} identifies the $\infty$-category of groupoids over $BG$ with the $\infty$-category of groupoids with $G$-action. A similar statement holds for the $2$-topos of sheaves of groupoids (i.e.~stacks) on any site. 
\end{ex}

\begin{ex}
	Let $\X$ be an $\infty$-topos of simplicial sheaves on a category equipped with a Grothendieck topology and let $X$ be any space. Suppose $\pi\colon\X\to\Tinf$ is the unique geometric morphism. Then there is an equivalence of $\infty$-categories between sheaves in $\X$ with a morphism to the constant sheaf $\pi^\ast(X)$ and sheaves in $\X$ with an action by the loop space $\Omega X$. 
\end{ex}

{We prove Theorem \ref{theorem: mainthminfty}  using several of the lemmas below. The next result is a restatement of Lurie's.}

\begin{lem}[{\cite[5.2.6.28, 5.2.6.29]{ha}}]\label{lemma: dkfresult}
Let $X$ be a pointed and connected object of an $\infty$-topos $\X$. Then there is an equivalence of $\infty$-categories: \[\bar{\theta}\colon \X_{/X}\to \lmodinf_{\Omega X}(\X).\] Moreover, pulling back along the pointing $\ast\to X$ is equivalent to the composite of the forgetful functor $\lmodinf_{\Omega X}(\X)\to \X$ with $\bar{\theta}$
\end{lem}

The proof of the following application of Lemma \ref{lemma: dkfresult} was suggested to us by Peter Haine.

\begin{cor}\label{cor: dkfresult of n topoi}
	Let $X$ be a pointed and connected object of an $n$-topos $\XX$. Then there is an equivalence of $\infty$-categories: \[\lmodinf_{\Omega X}(\XX)\simeq \XX_{/X}.\]
\end{cor}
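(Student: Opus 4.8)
The plan is to reduce the $n$-toposic statement to the $\infty$-toposic statement of Lemma \ref{lemma: dkfresult} by embedding $\XX$ into an $n$-localic $\infty$-topos and transporting the equivalence along truncation. Concretely, let $\Xh$ be an $n$-localic $\infty$-topos with $\tau_{\leq n-1}\Xh\simeq\XX$, which exists by \cite[Proposition 6.4.5.7]{htt} (as recorded in the remark preceding the proof of Corollary \ref{cor: cobar in n-topoi}). Let $i\colon\XX\simeq\tau_{\leq n-1}\Xh\hookrightarrow\Xh$ denote the inclusion; it is a right adjoint to $\tau_{\leq n-1}$, hence preserves limits (in particular finite products and pullbacks), so $i$ is symmetric monoidal for the Cartesian structures. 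A pointed connected object $X\in\XX$ therefore maps to a pointed connected object $i(X)\in\Xh$ (connectivity is detected by $\tau_{\leq k-1}$, and $\tau_{\leq k-1}i(X)\simeq\tau_{\leq k-1}X$ is terminal since $X$ is $1$-connective and $k=1\leq n$), and $\Omega i(X)\simeq i(\Omega X)$ since $i$ preserves the defining pullback. Applying Lemma \ref{lemma: dkfresult} to $i(X)$ in $\Xh$ gives $\lmodinf_{\Omega i(X)}(\Xh)\simeq\Xh_{/i(X)}$.

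Next I would identify the full subcategories of the two sides of this equivalence that correspond to $\XX$. On the slice side, $\Xh_{/i(X)}$ contains the full subcategory of morphisms $Z\to i(X)$ with $Z$ an $(n-1)$-truncated object of $\Xh$; since $i(X)$ is itself $(n-1)$-truncated and truncation of objects is computed compatibly in slices, this subcategory is equivalent to $\XX_{/X}$ (one can invoke that $\tau_{\leq n-1}$ on $\Xh_{/i(X)}$ agrees with the slice of $\tau_{\leq n-1}$, using that the forgetful functor $\Xh_{/i(X)}\to\Xh$ creates truncations onto $(n-1)$-truncated objects when the base is $(n-1)$-truncated). On the module side, $\Omega i(X)\simeq i(\Omega X)$ is an $\mathbb{A}_\infty$-algebra which is $(n-1)$-truncated (indeed $(n-2)$-truncated, being a loop object), and I claim $\lmodinf_{\Omega X}(\XX)$ is the full subcategory of $\lmodinf_{i(\Omega X)}(\Xh)$ on those modules whose underlying object is $(n-1)$-truncated — this uses that the forgetful functor $\lmodinf_A(\mathcal{C})\to\mathcal{C}$ detects limits and that left modules in $\XX$ over $\Omega X$ agree with left modules in $\Xh$ over $i(\Omega X)$ supported on $\XX$, since $i$ is a limit-preserving fully faithful symmetric monoidal functor whose essential image is closed under the relevant tensoring (tensoring over $\Tinf$ with the simplicial bar construction, whose terms are finite products $A^{\times m}\times M$, stays $(n-1)$-truncated, and the geometric realizations involved land back in $\XX$ because $\XX\subseteq\Xh$ is closed under colimits). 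It then remains to check that the equivalence of Lemma \ref{lemma: dkfresult}, being natural, restricts to a bijection between these two subcategories on objects and on mapping spaces; since both subcategories are carved out by the same truncation condition on the underlying object, and the equivalence is compatible with the forgetful functors to $\Xh$ up to the identification $\Omega i(X)$-action $\leftrightarrow$ slicing, this restriction is automatic.

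The main obstacle I anticipate is the bookkeeping in the previous paragraph: verifying precisely that the equivalence $\lmodinf_{\Omega i(X)}(\Xh)\simeq\Xh_{/i(X)}$ matches the two $(n-1)$-truncation conditions, i.e. that a slice object $Z\to i(X)$ has $Z$ in $\XX$ if and only if the corresponding $\Omega i(X)$-module has $(n-1)$-truncated underlying object. The cleanest way to handle this is to recall how the equivalence of Lemma \ref{lemma: dkfresult} is built (it is Lurie's \cite[5.2.6.28]{ha}, coming from the monadic description of the slice together with the identification of the relevant monad with $-\times\Omega X$, via the bar construction), and to observe at the level of underlying objects that if $Z\to i(X)$ is the slice object then the underlying object of the associated module is the fiber $F$ of $Z\to i(X)$ over the basepoint, so $Z$ sits in a pullback square with $F$, $1_{\Xh}$, and $i(X)$; since $i(X)$ and $1_{\Xh}$ are $(n-1)$-truncated and $(n-1)$-truncated objects are closed under limits, $F$ is $(n-1)$-truncated iff $Z$ is. This squares the circle. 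An alternative, possibly slicker, route that avoids reproving Lemma \ref{lemma: dkfresult} would be to run the argument of the proof of Corollary \ref{cor: cobar in n-topoi} verbatim, applying $\tau_{\leq n-1}$ to the equivalence over $\Xh$ and checking essential surjectivity and full faithfulness of the induced functors on the two sides as there; I would present whichever of these two the referee finds more transparent, but the pullback-square observation above is the key technical point either way.
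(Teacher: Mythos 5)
Your proposal follows essentially the same route as the paper: pass to an $n$-localic $\infty$-topos $\Xh$ with $\tau_{\leq n-1}\Xh\simeq\XX$, apply Lemma \ref{lemma: dkfresult} there, and identify the $(n-1)$-truncated part of each side of the resulting equivalence with the corresponding category over $\XX$; the paper does this by applying the product-preserving functor $\tau_{\leq n-1}$ and reusing the argument of Corollary \ref{cor: cobar in n-topoi} with modules in place of algebras, which is exactly the alternative you sketch at the end. One correction to your stated key technical point: closure of $(n-1)$-truncated objects under limits only gives the direction ``$Z$ truncated $\Rightarrow$ the fiber $F$ is truncated''; for the converse you should instead use that $1_{\Xh}\to i(X)$ is an effective epimorphism (as $X$ is connected), so that $Z\to i(X)$ is $(n-1)$-truncated once its base change $F\to 1_{\Xh}$ is, and then $Z\to i(X)\to 1_{\Xh}$ is a composite of $(n-1)$-truncated morphisms.
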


\begin{proof}
Let $\Xh$ be the $n$-localic $\infty$-topos whose $n$-truncation is equivalent to $\XX$, 
i.e.~$\tau_{\leq n-1}\Xh\simeq \XX$. By Lemma \ref{lemma: dkfresult} we have an equivalence: \[\lmodinf_{\Omega X}(\Xh)\simeq \Xh_{/X}.\] By Lemma \ref{lem: lmodequiv} there is an equivalence $\tau_{\leq n-1}\lmodinf_{\Omega X}(\YY)\simeq \lmodinf_{\Omega X}(\XX)$. Therefore it only remains to show that $\tau_{n-1}(\YY_{/X})\simeq \XX_{/X}$. 

Recall that morphism spaces in a slice $\infty$-category are computed as pullbacks. In our case, suppose $f:Z\to X$ and $g\colon Y\to X$ are objects of $\YY_{/X}$. Then the space of morphisms from $f$ to $g$ is equivalent to, as shown in \cite[Lemma 5.5.5.12]{htt}, the upper left corner of the following pullback diagram:

\[\begin{tikzcd}
	{\mathscr{Y}_{/X}(f,g)} & {\mathscr{Y}(Z,Y)} \\
	{\ast} & {\mathscr{Y}(Z,X)}
	\arrow["{g\circ -}", from=1-2, to=2-2]
	\arrow["{\{f\}}"', from=2-1, to=2-2]
	\arrow[from=1-1, to=2-1]
	\arrow[from=1-1, to=1-2]
	\arrow["\lrcorner"{pos=0, rotate=0}, from=1-1, to=2-2, phantom]
\end{tikzcd}\]

By considering the long exact sequence in homotopy groups associated to the above fiber sequence of spaces one readily sees that if $X$ is $(n-1)$-truncated then $Y$ is $(n-1)$-truncated in $\YY$ if and only if $g\colon Y\to X$ is $(n-1)$-truncated in $\YY_{/X}$ (c.f.~\cite[Lemma 5.5.6.14]{htt}). So the full subcategory of $(n-1)$-truncated objects of $\YY_{/X}$ is precisely the full subcategory spanned by morphisms $Y\to X$ whose domain is $(n-1)$-truncated in $\YY$ or, in other words, $\XX_{/X}$. 
\end{proof}

\begin{rem}
From Lemma \ref{lemma: dkfresult} it follows that one direction of the equivalence given in Corollary \ref{cor: dkfresult of n topoi}, $\XX_{/X}\to \lmodinf_{\Omega X}(\Xh)$, is again given on underlying objects by taking the fiber along the pointing $\ast\to X$. 
\end{rem}

We now detour onto the special case that our object $X$ is the pullback of an $\infty$-groupoid, i.e.~$X\simeq \pi^\ast(Y)$ for $Y\in\Tinf$.

\begin{lem}\label{lemma:modules over loops are functors}
For a connected $\infty$-groupoid $X$ and a presentable $\infty$-category $\Cinf$ there is an equivalence of $\infty$-categories $\lmodinf_{\Omega X}(\Cinf)\simeq \Fun(X,\Cinf)$. 
\end{lem}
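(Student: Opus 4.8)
The plan is to reduce the statement to a known form of Lurie's straightening--unstraightening theorem. First I would invoke the equivalence $\mathcal{LCoMod}_{X}(\E)\simeq \E_{/X}$ in the special case where $X$ is a connected $\infty$-groupoid pulled back to $\E$ along the geometric morphism $\pi\colon\E\to\Tinf$; this uses that every object of a Cartesian monoidal $\infty$-category is canonically a cocommutative coalgebra together with the slice description of comodules (the combination recorded later as Corollary~\ref{corollary: slicecomodules}), so that $\Omega$-modules over $X$ become objects of $\E_{/X}$ once we combine this with Lemma~\ref{lemma: dkfresult}. Since $\E$ is presentable, the slice $\E_{/X}$ is itself presentable, and $X\simeq\colim_X 1_\E$ exhibits $\E_{/X}$ as a colimit over $X$ of copies of $\E$ in $\mathcal{Pr}^{L}$.

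Concretely, I would argue as follows. Because $X$ is an $\infty$-groupoid, $\colim_X 1_{\Tinf}\simeq X$, and the functor $X\to\Catinf$ (or rather $X\to\mathcal{Pr}^{L}$) constant at $\E$ has colimit $\Fun(X,\E)$: for a constant diagram over an $\infty$-groupoid, the lax/oplax colimit is the ordinary functor category, since there is no nontrivial twisting. On the other hand, the same constant colimit computes $\E_{/X}$: base change along the projections $1_\E\to X$ (i.e. along $\pi^\ast$ of the points of $X$) assembles into an equivalence $\E_{/X}\simeq\colim_{X}\E$, using that $X\simeq\colim_X 1_\E$ in $\E$ and that slicing commutes with colimits of $\infty$-topoi / presentable categories in the appropriate sense. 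Comparing the two computations of $\colim_X\E$ yields $\E_{/X}\simeq\Fun(X,\E)$, and chaining with $\lmodinf_{\Omega X}(\E)\simeq\E_{/X}$ finishes the proof.

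Alternatively — and this is probably the cleanest route to write up — I would cite the parametrized/straightening form of the result directly: for a presentable $\infty$-category $\E$ and a Kan complex $X$, there is an equivalence $\Fun(X,\E)\simeq\E_{/X}$ given by sending a functor to the colimit of its total space over $X$ with its canonical map to $\colim_X 1_\E\simeq X$ (this is the relative version of straightening--unstraightening, \cite[\S2.2]{htt} combined with $\E$ being tensored over $\Tinf$ as in Notation (2)); then Lemma~\ref{lemma: dkfresult} applied in $\E$ supplies $\lmodinf_{\Omega X}(\E)\simeq\E_{/X}$, and composing gives the claim. One can also see the equivalence $\lmodinf_{\Omega X}(\E)\simeq\Fun(X,\E)$ very directly: $\Omega X$ is a group object of $\Tinf$ with $B(\Omega X)\simeq X$ (as $X$ is connected), and a left module over $\Omega X$ in $\E$ is by definition an action of this group, which straightens to a functor out of $B(\Omega X)\simeq X$; this is essentially \cite[Remark 4.2.3 ff.]{ha} relating modules over group objects to local systems, now internal to $\E$ rather than $\Tinf$.

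\textbf{Main obstacle.} The technical heart is making precise the assertion that ``$\E_{/X}$ is the constant colimit $\colim_X\E$'' in a way that is natural enough to match it up with $\Fun(X,\E)=\colim_X\E$ under straightening; i.e. checking that the two evident functors $\Fun(X,\E)\to\E_{/X}$ and back are mutually inverse, and not merely abstractly comparing colimit presentations. Concretely one must verify that the functor sending $F\colon X\to\E$ to $\bigl(\colim_X F\to\colim_X 1_\E\simeq X\bigr)$ is an equivalence, with inverse the ``fiber'' functor; this is where presentability of $\E$ (hence universality of colimits in the relevant sense, or just the fact that $\E$ is tensored over $\Tinf$ with the tensoring computing such colimits fiberwise) does the real work, and it is the step I would want to be most careful about or else simply delegate wholesale to a citation.
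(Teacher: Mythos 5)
There is a genuine gap here: every route you propose passes through an identification of $\lmodinf_{\Omega X}(\E)$ or $\Fun(X,\E)$ with a slice category $\E_{/X}$, but the lemma is stated for an \emph{arbitrary presentable} $\infty$-category $\E$, and those slice descriptions are special to $\infty$-topoi. A general presentable $\E$ has no geometric morphism $\pi\colon\E\to\Tinf$ (only the tensoring $X\mapsto \colim_X 1_\E$, which is not left exact); Lemma \ref{lemma: dkfresult} is proved only for $\infty$-topoi, since it rests on descent; and the assertion $\E_{/\colim_X 1_\E}\simeq\Fun(X,\E)\simeq \lim_X\E$ is exactly the statement that colimits in $\E$ are universal and satisfy descent, which presentability does \emph{not} supply. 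Your ``Main obstacle'' paragraph leans on ``presentability $\ldots$ hence universality of colimits,'' but that implication is false: universality of colimits is one of the Giraud-type axioms that singles out topoi among presentable categories. Concretely, take $\E=\Spinf$ and $X=S^1=B\mathbb{Z}$: then $\Fun(S^1,\Spinf)\simeq\lmodinf_{\Sigma^\infty_+\mathbb{Z}}(\Spinf)$ (local systems of spectra on the circle), which is not equivalent to $\Spinf_{/\Sigma^\infty_+ S^1}$. Your third sketch (``a left module over $\Omega X$ straightens to a functor out of $B(\Omega X)$'') is precisely the statement to be proved, not an argument for it, once $\E\neq\Tinf$.

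The paper's proof sidesteps all of this by never slicing $\E$ itself: it applies Lurie's Morita-type theorem \cite[Theorem 4.8.4.1]{ha} to write $\lmodinf_{\Omega X}(\E)\simeq Lin\Fun^{L}_{\Tinf}(\rmodinf_{\Omega X}(\Tinf),\E)\simeq\Fun^L(\rmodinf_{\Omega X}(\Tinf),\E)$, and only then uses the topos-level identifications $\rmodinf_{\Omega X}(\Tinf)\simeq\Tinf_{/X}\simeq\Fun(X,\Tinf)$ inside $\Tinf$, finishing with the universal property of presheaf categories \cite[Theorem 5.1.5.6]{htt} to obtain $\Fun^L(\Fun(X,\Tinf),\E)\simeq\Fun(X,\E)$. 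To salvage your approach you would have to either restrict $\E$ to be an $\infty$-topos (which recovers Lemma \ref{lemma: gengrothconstruction} combined with Lemma \ref{lemma: dkfresult}, not the stated lemma) or insert a reduction to the case $\E=\Tinf$ of the kind the paper performs.
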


\begin{proof}
		As a result of  \cite[Theorem 4.8.4.1]{ha} we have an equivalence: $$Lin\Fun^{L}_{\Tinf}(\rmodinf_{\Omega X}(\Tinf),\Cinf)\simeq \lmodinf_{\Omega X}(\Cinf),$$ where $Lin\Fun^{L}_{\Tinf}(\rmodinf_{\Omega X}(\Tinf),\Cinf)$ is the $\infty$-category of colimit preserving functors from $\rmodinf_{\Omega X}(\Tinf)$ to $\Cinf$ that are linear with respect to the tensoring over $\Tinf$. 
		Note however that the functors $\rmodinf_{\Omega X}(\Tinf)\to \Cinf$ which are $\Tinf$-linear, in the sense of \cite[Definition 4.5.2.7]{ha}, are exactly the functors preserving colimits of functors indexed by $\infty$-groupoids (this follows from the fact that both $\Tinf$ and $\Cinf$ are tensored over $\Tinf$ by taking colimits, as in  \cite[Remark 5.5.1.7]{htt}), so $Lin\Fun^{L}_{\Tinf}(\rmodinf_{\Omega X}(\Tinf),\Cinf)$ is exactly the $\infty$-category of colimit preserving functors from $\rmodinf_{\Omega X}(\Tinf)$ to $\Cinf$, i.e.~there is an equivalence of $\infty$-categories: $$Lin\Fun^{L}_{\Tinf}(\rmodinf_{\Omega X}(\Tinf),\Cinf)\simeq \Fun^L(\rmodinf_{\Omega X}(\Tinf),\Cinf).$$
		
		By Lemma \ref{lemma: dkfresult} there is an equivalence: $\rmodinf_{\Omega X}(\Tinf)\simeq \Tinf_{/X}.$ By the straightening construction of \cite{htt}, there is an equivalence   $\Tinf_{/X}\simeq \Fun(X^{\op},\Tinf)$. Because $X$ is an $\infty$-groupoid, there is a canonical equivalence $X^{\op}\simeq X$, which gives the equivalence $\Tinf_{/X}\simeq \Fun(X,\Tinf)$. 
		By \cite[Theorem 5.1.5.6]{htt} we have that colimit preserving functors out of $\Fun(X,\Tinf)$ are equivalent to functors out of $X$, i.e.~(since $\Cinf$ is presentable and therefore cocomplete) there is an equivalence: $$\Fun^L(\Fun(X,\Tinf),\Cinf)\simeq \Fun(X,\Cinf).$$
		Therefore, we have an equivalence: $$\mathcal{LMod}_{\Omega X}(\Cinf)\simeq Lin\Fun^{L}_{\Tinf}(\mathcal{RMod}_{\Omega X}(\Tinf),\Cinf)\simeq \Fun(X,\Cinf),$$ which completes the proof. 
\end{proof}

\begin{lem}\label{lemma: gengrothconstruction}
	For an $\infty$-topos $\X$ and a connected $\infty$-groupoid $X$, there is an equivalence of $\infty$-categories $\Fun(X,\X)\simeq \X_{/\pi^\ast(X)}$, where $\pi^\ast\colon \Tinf\to\XX$ is the left adjoint in the canonical geometric morphism from $\XX$ to $\Tinf$. 
\end{lem}

\begin{proof}
We show that both $\Fun(X,\X)$ and $\X_{/\pi^\ast(X)}$ are equivalent to $lim_X\X$, the limit in the $\infty$-category of (small) $\infty$-categories, $\Catinf$, of the constant diagram valued in $\X$. 
Since $X\simeq colim_X 1_{\Catinf}$ is the colimit of the constant diagram valued in the terminal Kan complex (thought of as an $\infty$-category), we have, in $\mathcal{Cat}_\infty$, an equivalence: 
\[\Fun(X,\X)\simeq \Fun(colim_X 1_{\Catinf},\X)\simeq lim_X \Fun(1_{\Catinf},\X)\simeq lim_X\X.\]  

Now recall from \cite[Lemma 6.1.1.1]{htt} that there is a Cartesian (and coCartesian) fibration $p\colon \mathcal{O}_{\X}\to \X$ whose fibers over $U\in\X$ are the slice $\infty$-topoi $\X_{/U}$. 
This fibration has an associated functor (by straightening) $F_\X\colon \X^{\op}\to \Catinf$, $U\mapsto \X_{/U}$.
By applying \cite[Proposition 6.1.3.10]{htt} to \cite[Theorem 6.1.3.9 (3)]{htt}, we get that $F_\X$ takes colimits in $\X$ to limits in $\Catinf$ (in fact in $Pr^L$, the sub-$\infty$-category of \textit{presentable} $\infty$-categories and left adjoints). Recall that the pullback along the geometric morphism $\pi\colon \X\to \Tinf$ preserves colimits and terminal objects (cf.~\cite[Definition 6.3.1.5]{htt}), so $\pi^\ast(X)\simeq\pi^\ast(colim_X1_\Tinf)\simeq colim_X\pi^\ast(1_\Tinf)\simeq colim_X1_{\X}$. So we have that $F_{\X}(\pi^\ast(X))\simeq \X_{/\pi^\ast(X)}\simeq lim_X\X_{/1_{\X}}\simeq lim_X\X$. This completes the proof.
\end{proof}

\begin{lem}\label{lemma: ngengrothconstruction}
	For an $n$-topos $\XX$ and a connected $(n-1)$-groupoid $X$, there is an equivalence of $\infty$-categories $\Fun(X,\XX)\simeq \XX_{/\pi^\ast(X)}$ where $\pi^\ast$ is left adjoint in the canonical geometric morphism from $\XX$ to $\tau_{\leq n-1}\Tinf$. 	
\end{lem}

\begin{proof}
	The proof proceeds identically to that of Lemma \ref{lemma: gengrothconstruction} except that we must apply \cite[Propositions 6.4.4.6, 6.4.4.7]{htt} to \cite[Theorem 6.4.4.5 (3)]{htt}, instead of \cite[Proposition 6.1.3.10]{htt} to  \cite[Theorem 6.1.3.9]{htt}. It is helpful to notice that, in the application of \cite[Theorem 6.4.4.5]{htt} here, every diagram indexed by a connected $(n-1)$-groupoid (which is just a certain kind of $\infty$-groupoid) consists of $(-2)$-truncated morphisms, i.e.~equivalences, because every morphism in an $\infty$-groupoid is an equivalence and there is an equivalence between every vertex of a connected $\infty$-groupoid. 
		\end{proof}

\begin{rem}
	The above lemmas may be thought of as a generalized straightening and unstraightening, or the Grothendieck construction and its inverse, for $n$-topoi for $0\leq n\leq\infty$. In \cite[Theorem 3.4.20]{schreiberdifferentialcoh}, Schreiber describes a special case of our Lemma \ref{lemma: gengrothconstruction} which only holds for $\infty$-topoi with the additional (and somewhat strong) property of having an \emph{$\infty$-connected site of definition}. The reader is referred to the above citation for definitions and further results on such $\infty$-topoi. 
\end{rem}

\begin{proof}[Proof of Theorem \ref{theorem: mainthminfty}]
We first prove the result for $k=1$. Let $X$ be a pointed connected object in $\X$. The first desired statement is a combination of Corollary \ref{cor: dkfresult of n topoi} (or Lemma \ref{lemma: dkfresult} when $n=\infty$) and Corollary \ref{corollary: slicecomodules} where $\Cinf=\X$:
\[
\mathcal{LCoMod}_{X}(\X) \simeq \X_{/X}\simeq \mathcal{LMod}_{\Omega X}(\X).
\]
The second statement for $X\simeq \pi^*(Y)$, where $Y$ is a pointed $(n-1)$-groupoid, follows from Lemma \ref{lemma: ngengrothconstruction} (or Lemma \ref{lemma: gengrothconstruction} when $n=\infty$).

Now assume $1 < k < \infty$. Let $X$ be a pointed $k$-connective object in  $\X$. Define $X':= \Omega^{k-1}X$. Since $X'$ is $1$-connective (i.e.~connected) if and only if $X$ is $k$-connective, we obtain directly:
\[
\mathcal{LCoMod}_{X'}(\X)\simeq \X_{/X'}\simeq \mathcal{LMod}_{\Omega X'}(\X),
\]
from the case $k=1$. 
Moreover, if $X\simeq \pi^*(Y)$ for some $(n-1)$-groupoid $Y$, then we get:
\[
X'= \Omega^{k-1}X \simeq \Omega^{k-1} \pi^* (Y) \simeq \pi^*(\Omega^{k-1} Y),
\]
as the functor $\pi^*: \tau_{\leq n-1}\mathcal{S}\rightarrow \X$ preserves finite limits. Hence we apply again Lemma \ref{lemma: ngengrothconstruction} (or Lemma \ref{lemma: gengrothconstruction} if $n=\infty$) to conclude.
\end{proof}

\renewcommand{\bibname}{References}
\bibliographystyle{amsalpha}
\bibliography{biblio}
\end{document}